

\documentclass[EJP]{ejpecp} 


\usepackage[T1]{fontenc}
\usepackage[utf8]{inputenc}
\usepackage{tikz}

\usetikzlibrary{positioning,arrows.meta} 
\makeatletter
\def\namedlabel#1#2{\begingroup
    #2%
    \def\@currentlabel{#2}%
    \phantomsection\label{#1}\endgroup
}
\makeatother


\usepackage{enumerate}  


\SHORTTITLE{Stochastic Sandpile Model: exact sampling and complete graph}

\TITLE{Stochastic Sandpile Model: exact sampling and complete graph} 



\AUTHORS{%
Concetta~Campailla\footnote{Sapienza Universit\`a di Roma, Dipartimento di Matematica Guido Castelnuovo, Roma, Italy.
\EMAIL{mariaconcetta.campailla@uniroma1.it}}\orcid{0009-0008-7638-4537}
  \and 
  Nicolas~Forien\footnote{CEREMADE, CNRS, Universit\'e Paris-Dauphine, Universit\'e PSL, 75016 Paris, France. \BEMAIL{forien@ceremade.dauphine.fr}}\orcid{0000-0002-3146-644X}}



\KEYWORDS{Stochastic Sandpile Model;
self-organized criticality; interacting particle systems} 

\AMSSUBJ{60K35; 60J27; 82C22} 

\SUBMITTED{July 14, 2025} 
\ACCEPTED{February 19, 2026} 




\VOLUME{0}
\YEAR{2023}
\PAPERNUM{0}
\DOI{10.1214/YY-TN}


\ABSTRACT{We study the dynamics of the Stochastic Sandpile Model on finite graphs,
with two main results.
First, we describe a procedure to exactly sample from the stationary
distribution of the model in all connected finite graphs, extending a
result obtained by Levine and Liang for Activated Random Walks.
Then, we study the model on the complete graph with a number of vertices
tending to infinity and show that the stationary density tends to~$1/2$.
Along the way, we introduce a new point of view on the dynamics of the
model, with active and sleeping particles, which may be of independent
interest.}



\begin{document}



\section{Introduction}
 The Stochastic Sandpile Model (SSM) was introduced to study \textit{self-organized criticality},  a notion  first proposed in the eighties~\cite{BTW} to explain the
 critical behaviour in steady states without fine-tuning of system parameters.
 The Stochastic Sandpile Model is a continuous-time
 particle system on a finite or infinite graph, where the number of particles at
 a site~$x$ is denoted by~$\eta(x).$ A site~$x$ is declared
 unstable when~$\eta(x) \geq 2$; otherwise, it is declared stable. Each unstable site~$x$ topples at rate~$1$, sending~$2$
 of its particles to neighbours chosen independently and uniformly.  
 Any configuration with only stable sites is an absorbing state for the dynamics.
 Initially, the number of
 particles at each vertex of the graph is an independent Poisson random variable of parameter~$\rho >0$
 usually called the \textit{particle density}.  
 
 Prior studies of the Stochastic Sandpile Model have primarily focused on its dynamics on~$\mathbb{Z}^d$, or more broadly, on infinite vertex-transitive graphs. In this case, the model undergoes an absorbing-state phase transition. More precisely, Rolla and Sidoravicius~\cite{RS} showed that there exists a critical value~$\rho_c$ such that if~$\rho<\rho_c$, the system fixates, i.e., each vertex is
 visited only finitely many times almost surely. In contrast, if~$\rho>\rho_c$, each vertex is visited infinitely many times almost surely. 
  It was proved that the critical value~$\rho_c$ is strictly below one and strictly greater than zero in any dimension~\cite{CFT, HHRR22, RS, ST17}. 

  The original models of self-organized
 criticality were defined on finite graphs. Consider the dynamics defined  on a large finite subset of an infinite connected graph. Whenever a particle moves to the boundary, it is removed from the system. This is called \textit{dissipation}. Since the set of vertices is finite, almost surely, after a finite time the system globally stabilises, i.e., all sites are stable. At this point, a new particle is added to a random uniform site of the graph.
 This is called \textit{driving}. The new particle can destabilise the system. The dynamics will then continue until a stable configuration is
 reached. Then, a new particle is added again, and one waits until the system globally stabilises, and so on. This procedure defines a Markov chain on the set of stable configurations, which is called the driven-dissipative model.
 When the average particle density is too high, the introduction of one particle triggers intense activity, leading to a significant number of particles being absorbed at the boundary. Conversely, if the average particle density is too low, the particle does not cause much interaction with other particles, causing mass to accumulate instead.  Thanks to this well-structured mechanism, the model is attracted to a stationary
 state with an average density~$\rho_s$ as the finite volume exhausts the graph. 

 It is conjectured that the typical density~$\rho_s$ in the steady state of the driven-dissipative system 
 corresponds to~$\rho_c$, the point at which the infinite system displays the phase transition defined
 above. This is called the \textit{density conjecture} (see for example~\cite{LS24} for a presentation). 
 This problem has been studied for similar models known as Activated Random Walks (ARW) and the Abelian Sandpile model (ASM), which share
 some features with SSM, and which were also introduced to study self-organized criticality. In particular, the density conjecture has been proved for ARW on~$\mathbb{Z}$  by Hoffman, Johnson and Junge~\cite{HJJ} and it
was shown to be false on some graphs~\cite{FLW2} for the ASM.

 The present article contains two main results. The first result is a simple algorithm to exactly sample a configuration from the stationary distribution of the driven-dissipative model, on any finite connected graph. This is the content of Theorem \ref{thmstationarydist} below. This extends a previous result which gave a similar description for ARW~\cite{LL}.
 The second result is that the stationary density~$\rho_s$ of the model on the complete graph is equal to~$1/2$, with exponential concentration bounds for the stationary distribution around this value: see Theorem \ref{mainthm2} below. This result had been suggested in~\cite{FM}.
  Recently, a similar result was obtained for ARW, which determines the stationary density in the complete graph~\cite{JMT}. However, obtaining a similar result for the SSM is more complex.

\section{Definitions and main results} 
\subsection{Definition of the driven-dissipative Markov chain} \label{Sectiondrivendissipativemc}

The driven-dissipative Markov chain is defined as follows.
 Let us consider~$G= (V \cup \{s\},E)$, a connected finite graph with a distinguished vertex~$s$ called the ``sink''. This includes the case mentioned above of~$V$ being a finite subset of an infinite graph, by simply identifying all vertices outside~$V$ into a single vertex called the sink. A particle
 configuration can be represented as~$\eta \in \Sigma:= \{0,1\}^V$, where~$\eta(x)$ denotes the number
 of particles at~$x$ and all sites of $V$ are stable. 
 We initiate the system in an arbitrary configuration of only stable sites, $\eta_0 \in
 \Sigma$.
 At each time step~$n\geq 1$ we add to the configuration~$\eta_{n-1}$ a particle at a vertex~$x$ uniformly chosen in 
~$V \cup \{s\}$. If the sink~$s$ is chosen, we let~$\eta_{n}=\eta_{n-1}$.\footnote{Compared to the chain defined by adding on~$V$ instead of~$V\cup \{s\}$, this amounts to introducing a laziness probability of~$\frac{1}{N}$. We make this choice for technical reasons, to make the statement of Lemma  \ref{conn-micr-macr} shorter, but our main result, Theorem \ref{mainthm2}, also holds for the chain obtained by adding on~$V$, since it has the same stationary distribution.} Otherwise, if the vertex chosen was empty,  we set~$\eta_n:=\eta_{n-1}+\mathbb{1}_x$. If~$x$
 already hosted a particle, this site becomes unstable, because it now hosts two particles. Let the system evolve according
 to the stochastic sandpile dynamics defined above, and whenever a
 particle jumps to the sink, it instantaneously disappears from the system. 
 After
 an almost surely finite time, we obtain a stable configuration, and we define~$\eta_n \in \Sigma$ to be this configuration.
 The sequence of random variables~$\eta_0,\eta_1,\eta_2,\,,\dots$ thereby defined is a discrete-time Markov chain 
 on the finite configuration space~$\Sigma$. The chain is irreducible and
 aperiodic, and thus converges to a stationary distribution, which we denote by~$\pi$.

 More generally, instead of choosing the driving site uniformly, one may add the particle at a vertex sampled from an arbitrary probability distribution on $V \cup \{s\}$ which is not supported on~$\{s\}$. This defines a different driven–dissipative Markov chain but, as explained in Section~\ref{sec-exactsamplingresult}, it has the same stationary distribution, so the results of this paper extend to this more general setting.
 
 \subsection{Stationary density on the complete graph} \label{sec-stationary-density}
Let~$N\geq 1$. We denote by~$G_N = (V_{N}\cup \{s_N\},E_N)$ the complete
graph with self-loops, $|V_N|=N-1$ vertices, and with a distinguished vertex~$s_N$ playing the role of the sink. Since the graph is complete, any two stable configurations with the same number of particles are equivalent, so we can represent the configuration of the Markov chain~$(\eta_t)_{t \in \mathbb{N}}$ on~$G_N$ at time~$t$ with the number of particles~$\xi_t=\sum_{x \in V_N} \eta_t(x)$.  Then this new process~$(\xi_t)_{t \in \mathbb{N}}$ is a Markov chain  on~$\{0,\,\dots,\,N-1\}$ and its stationary distribution~$\pi_N$ satisfies the following. 
\begin{theorem}\label{mainthm2}
     For any~$\varepsilon>0$, there exists~$c>0$ such that, for~$N$ large enough,
\begin{align*}
            &\pi_N\big(\left[(\rho_s-\varepsilon)N,(\rho_s+\varepsilon)N\right]\big)\geq 1-e^{-cN},
        \end{align*}
    where
    \begin{equation*}
        \rho_s=\frac{1}{2}.
    \end{equation*}
In particular, $\pi_N$ tends to the Dirac measure at~$\rho_s$ when~$N\to\infty$. 

\end{theorem}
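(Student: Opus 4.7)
The plan is to combine the active/sleeping particle representation of the cascade (promised in the abstract) with a fluid-limit analysis on the complete graph. By the symmetry of $K_N$, a cascade triggered by adding a particle to an already-occupied site can be simulated as a sequential \emph{pop-and-place} procedure: one keeps a stack of active particles (initially of size~$2$, right after the first toppling), pops one at a time, and sends each popped particle to a uniformly random vertex of $V_N\cup\{s_N\}$. The particle is absorbed by the sink with probability $1/N$, lands on an empty site and becomes sleeping with probability $(N-1-k)/N$, or lands on a sleeping particle with probability $k/N$, waking it up so that the landing site becomes empty and two fresh active particles are pushed on the stack. Throughout the cascade one has the conservation law $k_t+Y_t=\xi_t+1-S_t$, so when the stack empties the chain advances to $\xi_{t+1}=\xi_t+1-S$, where $S$ is the total number of sink hits.

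The rescaled process $(x,y)=(k/N,Y/N)$ in slow time $\tau=(\text{number of pops})/N$ obeys the fluid ODE $\dot x=1-2x$, $\dot y=2x-1$, which makes $x=1/2$ the unique stable equilibrium of the cascade and $\rho_s=1/2$ transparent. Using Azuma/Chernoff inequalities along this trajectory together with the conservation law, I would prove the following sharp cascade estimates for $\rho=\xi_t/N$: if $\rho\leq 1/2-\varepsilon$, then $S=0$ with probability $1-O_\varepsilon(1/N)$ and $\xi_{t+1}=\xi_t+1$; if $\rho\geq 1/2+\varepsilon$, then $\bP(|\xi_{t+1}-N/2|>\delta N\mid\xi_t)\leq e^{-c_{\varepsilon,\delta}N}$ for every $\delta>0$, so a supercritical step is essentially a one-shot reset to $[N/2-\delta N,N/2+\delta N]$; and in the near-critical window $\rho\in[1/2-\varepsilon,1/2+\varepsilon]$, $\bP(|\xi_{t+1}-N/2|>(\varepsilon+\delta)N\mid\xi_t)\leq e^{-c_\delta N}$.

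The concentration statement for $\pi_N$ then follows from these estimates by a standard Foster/Kac-type argument. From any state, the chain reaches $I_\delta:=[(1/2-\delta)N,(1/2+\delta)N]$ within $O(N)$ steps with probability $1-e^{-cN}$: from below by a climb with drift $\approx 1$; from above by the first cascade-triggering step. Once inside $I_\delta$, exiting $I_\varepsilon$ (for some $\varepsilon>\delta$) in one step has probability at most $e^{-cN}$: exiting upward requires $(\varepsilon-\delta)N$ consecutive ``add-to-empty'' steps, of total probability at most $(1/2+\delta)^{(\varepsilon-\delta)N}$, while exiting downward requires a cascade with $S>(\varepsilon-\delta)N$, ruled out by the near-critical estimate. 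Combining polynomial return time with exponential single-step exit probability yields $\pi_N(I_\varepsilon^c)\leq e^{-cN}$ as claimed.

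The main obstacle is the exponential concentration of $S$ during a long supercritical cascade. In that regime the rescaled stack $y$ relaxes geometrically to the plateau $y_\infty=\rho-1/2>0$ and then terminates only by diffusive fluctuations, over a timescale of order $N^2$ pops during which $\Theta(N)$ sink hits accumulate. A two-phase analysis seems necessary: first track the deterministic attraction of $x$ to $1/2$, then couple the near-critical phase with a drift-corrected random walk whose total duration and sink-hit rate are both sharply concentrated via Bernstein-type inequalities.
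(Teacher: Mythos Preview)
Your framework coincides with the paper's: the pop-and-place microscopic chain $(X_t,Y_t)$ is exactly the chain introduced there (Definition~\ref{def-micro-chain} and Lemma~\ref{conn-micr-macr}), and the concentration of $\pi_N$ is deduced via a return-time/drift argument (Lemma~\ref{thmreturntime}). The genuine gap is your supercritical estimate. You claim that for $\rho\geq\tfrac12+\varepsilon$ one has $\bbP(|\xi_{t+1}-N/2|>\delta N\mid\xi_t)\leq e^{-cN}$, i.e.\ the cascade is a one-shot reset. This is false: starting the microscopic chain at $Y_0=1$ (or $Y_0=2$ in your convention), the stack empties in one or two pops with probability at least $(N-k)/N\approx\tfrac12-\varepsilon$ (resp.\ roughly $(\tfrac12-\varepsilon)^2$), since each pop lands on a non-sleeping site with probability $\approx 1-\rho$. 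In that event $S\in\{0,1\}$ and $\xi_{t+1}\in\{\xi_t-1,\xi_t,\xi_t+1\}$, still above $(\tfrac12+\varepsilon)N$. So with probability bounded away from~$0$ the cascade aborts. This breaks both your ``from above by the first cascade-triggering step'' argument and your upward-exit bound (short cascades with $S=0$ also produce $\xi_{t+1}=\xi_t+1$, so ``$(\varepsilon-\delta)N$ consecutive add-to-empty steps'' is not what upward escape requires).

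The paper proves instead only what is true (Lemma~\ref{propupperbound2}): for $k\geq\gamma N>\tfrac N2$, with \emph{positive} probability $C>0$ one has $\xi_1\leq k-\delta N$. This already yields a drift variable $D$ with $\bbE[D]<0$ (take $D=-\delta N_0$ with probability~$C$ and $D=+1$ otherwise), and together with the trivial bound $\Delta\xi_t\leq 1$ feeds into Lemma~\ref{thmreturntime}. The positive-probability statement factors as: (i)~with positive probability the stack reaches height $\varepsilon N$ before dying (a supermartingale bound on $e^{-cY_t}$, Lemma~\ref{prop1}); (ii)~from height $\varepsilon N$ the stack does not empty before $X$ has shed $\delta N$ particles, except with probability $e^{-cN}$ (Lemma~\ref{prop2}). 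Step~(ii) completely sidesteps the $O(N^2)$-pop two-phase analysis you flag as the main obstacle: one shows that from $Y=\lceil\varepsilon N\rceil$ the stack returns to that level exponentially many times before hitting~$0$ (Lemma~\ref{propmin}), and over exponentially many pops a Chernoff bound forces at least $N$ sink hits. No concentration of $X_{\tau_0}$ near $N/2$ is ever claimed or needed.
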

Whatever the initial number of particles, the system converges to a stationary state with a number of particles around the value~$N/2$ when~$N$ tends to infinity.   

Note that the above theorem continues to hold when the driving site is chosen according to an arbitrary probability distribution on $V\cup \{s\}$ which is not supported on~$\{s\}$, because the stationary distribution does not depend on the driving (see Section~\ref{sec-exactsamplingresult}).

 We denote by~$\mathbb{P}^N_k$ and~$\mathbb{E}^N_k$ the law and expectation, respectively, of the Markov
 chain~$(\xi_t)_{t\in \mathbb{N}}$ on~$G_{N}$ when~$\xi_0=k$. 
For any sequence~$(a_t)_{t \in \mathbb{N}}$ we
 use the notation \begin{equation*}
     \Delta a_t:=a_{t+1}-a_t, \ \ \ t \in \mathbb{N}
 \end{equation*}
 for the forward difference operator.

\subsection{Half-toppling dynamics}
\label{sec-half-topp}
One of the differences between ARW and SSM, which makes the latter model more difficult to study, is that in the SSM case, particles always move in pairs. A natural way to simplify the dynamics of the model is to allow a single particle to move;  to achieve this, the particles are sent to neighboring sites, but one at a time, leading to what we call \textit{half-topplings}. However, this must be consistent with the specification of the model. For this reason, sites which have seen an odd number of half-topplings
and contain at least one particle are deemed unstable. The half-toppling operation was introduced in~\cite{RS}.

 In order to characterise the stationary distribution~$\pi$ of the Markov chain~$\eta = (\eta_t)_{t\in \mathbb{N}}$,
 we introduce the Diaconis-Fulton graphical representation for the dynamics of SSM.
  Let~$G=(V\cup\{s\},\,E)$ be a connected finite graph with a sink vertex~$s$. The state of the system
 is denoted by~$(\eta,h)$, where~$\eta: V\to \mathbb{N}$ denotes the number of particles at each site and~$h:V\to \mathbb{N}$  is a function
 which counts the number of jumps already made at each site. A site~$x$ is unstable in the configuration~$(\eta,h)$  if~$\eta(x)\geq 2$ or~$\eta(x) = 1$ and~$h(x)$ is odd, otherwise we say that the vertex is stable. The configuration~$(\eta,h)$ is stable in~$V$ if every site of~$V$ is stable, otherwise we say that~$(\eta,h)$ is unstable in~$V$.
 We now define, for every~$x \in V$ and every~$y \in V \cup \{s\},\, y \neq x$ neighbour of~$x$ in the graph~$G$, an operator~$I_{xy}$ which corresponds to one particle jumping from~$x$ to~$y$: namely, if~$x \in V$ is unstable in the configuration~$(\eta,h)$, we define

 \begin{equation*}
    I_{xy}\eta(z)=
    \begin{cases}
\eta(x)-1
&\quad\text{if }z=x,\\
\eta(y)+1
&\quad\text{if }z=y \ \text{and} \ y \neq s,\\
\eta(z) 
&\quad\text{otherwise.}
\end{cases}
\end{equation*}

Note that if~$G$ contains self-loops and~$y=x$, then~$I_{xx}\eta=\eta$.

 Let us fix for now an array of instructions~$I =
(I^{x,j}\,:\, x \in V,\, j \in \mathbb{N})$,
where for every~$x\in V$ and~$j\in\mathbb{N}$,
~$I^{x,j}$ is a jump instruction~$I_{xy}$ to one of the neighbours~$y$ of~$x$ in the graph.

If~$x \in V$ is unstable in~$(\eta,h)$, we say we half-topple~$x$ when we act on a configuration~$(\eta, h)$ through the operator
$\Phi_x$, which is defined as
\begin{equation*}
    \Phi_x(\eta,h)=(I^{x,h(x)}\eta,h(x)+\mathbb{1}_x),
\end{equation*}
where~$\mathbb{1}_x(z)=1$ if~$x=z$ and~$0$ otherwise. The toppling operation~$\Phi_x$ at~$x$ is termed legal for a configuration
$(\eta, h)$ when~$x$ is unstable in~$(\eta, h)$, and illegal otherwise.
Given a sequence~$\alpha = (x_1,\,x_2,\,\dots,\,x_k)$ of non-sink sites, we denote by~$\Phi_\alpha = \Phi_{x_k} \Phi_{x_{k-1}}\cdots \Phi_{x_2} \Phi_{x_1}$
the
composition of the half-topplings at the sites~$x_1,\, x_2,\dots,\,x_{k-1},\, x_k$, in that order. We call~$\Phi_\alpha$ legal for a
configuration~$(\eta, h)$ if~$\Phi_{x_1}$ is legal for~$(\eta, h)$ and~$\Phi_{x_\ell}$ is legal for~$\Phi_{(x_1,\,x_2,\,\dots,\,x_{\ell-1)}}(\eta, h)$ for each~$\ell = 2,\,\dots,\,k$. In
this case we call~$\alpha$ a legal sequence of half-topplings for~$(\eta, h)$. We say that~$\alpha$ stabilises a configuration~$(\eta, h)$
if every site~$x \in V$ is stable in the configuration~$\Phi_{\alpha}(\eta,h)$.
Let~$m_\alpha=(m_{\alpha}(x): x \in V)$ be given by
\begin{equation*}
m_\alpha(x)=\sum_{\ell=1}^k \mathbb{1}(\{x_\ell=x\})\,.
\end{equation*}
In other words, $m_\alpha(x)$ indicates the number of times that the site~$x$ appears in the sequence~$\alpha$. 
 The following lemma gives one of the
 fundamental properties of the Diaconis-Fulton representation. For the proof, we refer to~\cite{RS}.
 \begin{lemma}[Abelian property]\label{abelianproperty}
 If~$\alpha$ and~$\beta$ are both legal sequences of half-topplings for~$(\eta,h)$ that stabilise~$(\eta,h)$, then~$m_{\alpha}=m_{\beta}$ and~$\Phi_{\alpha}(\eta, h)=\Phi_{\beta}(\eta,h)$.
\end{lemma}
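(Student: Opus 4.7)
My plan follows the classical Diaconis--Fulton strategy: reduce to a local commutation lemma, then bootstrap via a monotonicity (least action) principle. The first step is to show that half-topplings commute locally: if $x, y \in V$ are distinct and both unstable in $(\eta, h)$, then the sequences $(x, y)$ and $(y, x)$ are both legal for $(\eta, h)$ and $\Phi_y \Phi_x (\eta, h) = \Phi_x \Phi_y (\eta, h)$. This follows directly from the definitions: the half-toppling at $x$ applies the instruction $I^{x, h(x)}$, which depends only on $h(x)$ and modifies only $\eta(x)$ together with $\eta$ at a single neighbour of $x$, and the analogous statement holds for $y$; the resulting updates to $h$ and to $\eta$ are additive and therefore commute. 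For the preservation of legality, I would check that if $y$ is unstable in $(\eta, h)$ it remains unstable after $\Phi_x$: $h(y)$ is unchanged and $\eta(y)$ can only stay the same or increase by one, so each instability condition ($\eta(y) \geq 2$, or $\eta(y) = 1$ with $h(y)$ odd) is preserved.

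The main step is the least action principle: if $\alpha$ is any legal sequence for $(\eta, h)$ and $\beta$ is a legal stabilising sequence, then $m_\alpha \leq m_\beta$ pointwise. I would prove this by induction on the length of $\alpha = (x_1, \ldots, x_{k+1})$; with prefix $\alpha' = (x_1, \ldots, x_k)$, the inductive hypothesis gives $m_{\alpha'} \leq m_\beta$, so it suffices to show $m_{\alpha'}(x_{k+1}) < m_\beta(x_{k+1})$. For this, I would reorder $\beta$ so that it begins with $\alpha'$: proceeding one entry at a time, at stage $j$ the reordered $\beta$ has $x_1, \ldots, x_{j-1}$ at the front and a remaining suffix which is still legal and stabilising from $\Phi_{(x_1, \ldots, x_{j-1})}(\eta, h)$; since $x_j$ is unstable in that intermediate configuration and, by the legality preservation above, remains unstable at each position of the suffix until it is itself toppled, the suffix must contain at least one occurrence of $x_j$, whose first appearance can be swapped forward to the front by repeated applications of the commutation lemma. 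After $k$ stages $\beta$ has the form $\alpha' \gamma$, where $\gamma$ is a legal stabilising sequence from $\Phi_{\alpha'}(\eta, h)$; since $x_{k+1}$ is unstable in that configuration, $\gamma$ must topple $x_{k+1}$ at least once, giving $m_\beta(x_{k+1}) = m_{\alpha'}(x_{k+1}) + m_\gamma(x_{k+1}) \geq m_\alpha(x_{k+1})$, as required.

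Applying the least action principle to two stabilising sequences $\alpha, \beta$ in both directions yields $m_\alpha = m_\beta$. To conclude that $\Phi_\alpha(\eta, h) = \Phi_\beta(\eta, h)$, I would observe that the final $h$-function equals $h + m_\alpha = h + m_\beta$ in either case, while at each site $x$ the instructions consumed from $x$ are exactly $I^{x, h(x)}, I^{x, h(x)+1}, \ldots, I^{x, h(x) + m_\alpha(x) - 1}$, since successive topplings at $x$ consume successive indices regardless of interleaving; each such instruction contributes additively to $\eta$, so the final configuration depends only on $m_\alpha$ and not on the order. The principal obstacle I anticipate is the reordering step itself: one must verify that the repeated swaps preserve legality of the entire remaining suffix of $\beta$, not merely that of the two swapped entries. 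This is automatic because an internal commutation leaves the configuration at the position immediately after the swap unchanged, so the rest of $\beta$ acts on exactly the same state as before.
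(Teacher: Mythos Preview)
Your argument is correct and follows the standard Diaconis--Fulton route (local commutation plus a least-action/monotonicity principle), which is precisely the approach of the reference~\cite{RS} to which the paper defers; the paper itself does not give an independent proof of this lemma. One small remark: your inductive framing is slightly redundant, since the reordering procedure you describe already establishes, step by step, that $\beta$ can be rewritten as $\alpha\gamma$ with $\gamma$ legal and stabilising from $\Phi_\alpha(\eta,h)$, directly yielding $m_\alpha\leq m_\beta$ without invoking the inductive hypothesis for the other coordinates; but this is a matter of presentation, not substance.
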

 
The Abelian property tells us that the number of instructions used during the stabilisation of a configuration~$(\eta, h)$ and the final configuration do not depend on the half-toppling sequence used, so we can choose the order
 with which the sites are half-toppled, as long as the half-topplings are legal.
 This allows us to define the stable configuration reached when stabilising~$(\eta,h)$ without having to specify the order of the half-topplings. 
 We now introduce a probability measure on the space of instructions and of configurations. 
Let~$\mathcal{P}$ be a probability measure on the set of all possible fields of instructions,
which is such that the instructions~$(I^{x,j} )_{x\in V, j \in \mathbb{N}}$, are independent, with, for every~$x \in V$ and any~$j\in \mathbb{N}$, $\mathcal{P}(I^{x,j}=I_{xy})=\frac{1}{d_x}$ for any~$y \in V \cup \{s\}$ neighboring~$x$, where
~$d_x$ is the degree of the vertex~$x$ in~$G$. Let us consider~$\mathcal{P}_0$ a probability measure on the set~$\mathbb{N}^V \times \mathbb{N}^V$. We write~$\smash{P= \mathcal{P}_0 \otimes \mathcal{P}}$  for the probability measure
 where the initial configuration~$(\eta_0,h_0)$ and the array of instructions~$I$ are independent and respectively distributed according
 to~$\mathcal{P}_0$ and~$\mathcal{P}$.

\subsection{Exact sampling result}\label{sec-exactsamplingresult}
We can now state our other main result, which gives a procedure to exactly sample from the stationary distribution~$\pi$ of the Markov chain~$\eta=(\eta_t)_{t\geq 0}$ in all connected finite graphs.

\begin{theorem} \label{thmstationarydist}
Let~$G=(V \cup \{s\},E)$ be any finite connected graph with a sink vertex~$s$. Consider an initial configuration~$(\eta_0,h_0)$ such that~$\eta_0(x)=1$ for any~$x \in V$ and~$(h_0(x))_{x \in V}$ are independent Bernoulli random variables of parameter~$1/2$. Perform legal half-topplings in whatever order, with particles being killed when they jump to the sink, until no unstable sites remain in~$V$. Then the resulting stable configuration is exactly distributed according to~$\pi$, the stationary distribution of the driven-dissipative Markov chain~$(\eta_t)_{t \in \mathbb{N}}$ defined in Section \ref{Sectiondrivendissipativemc}. 
\end{theorem}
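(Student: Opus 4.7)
The plan is to show that the distribution $\mu_1$ output by the sampling procedure is invariant under the one-step transition of the driven-dissipative Markov chain; since this chain is irreducible and aperiodic on a finite state space, it has a unique stationary distribution $\pi$, and invariance forces $\mu_1 = \pi$. To check invariance, I would take $\eta \sim \mu_1$ and apply one step of the chain: add a particle at a uniform vertex $X \in V \cup \{s\}$ and stabilise. Since $\eta$ itself is the stabilisation of $(\mathbbm{1}_V, h_0)$ with $h_0$ a vector of independent Bernoulli$(1/2)$'s, the abelian property (Lemma~\ref{abelianproperty}) lets us reverse the order of these two stabilisations: the final configuration has the same distribution as the stabilisation of $(\mathbbm{1}_V + \mathbbm{1}_X, h_0)$ carried out directly. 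The problem therefore reduces to showing that this has the same law as the stabilisation of $(\mathbbm{1}_V, h_0)$.

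To do this, I would condition on $X = y \in V$ and on the value of $h_0(y) \in \{0,1\}$. The two starting configurations then differ only at the site $y$, ranging over the four local states $\{(1,0),(1,1)\}$ on the "no extra particle" side and $\{(2,0),(2,1)\}$ on the "extra particle" side, each weighted $1/2$. Exploiting the i.i.d.\ structure of the instructions $(I^{y,j})_{j\geq 0}$ together with a flip $h_0(y) \mapsto 1 - h_0(y)$, one can couple the two sides so that, once all the initial half-topplings at $y$ have been performed, they differ only by a single "extra" particle emitted towards a uniformly chosen neighbour of $y$. The argument would then continue by iterating this cascading comparison at each newly visited vertex, each time using the Bernoulli$(1/2)$ odometer there to absorb the difference via the same local symmetry.

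The main obstacle I expect is controlling this cascade: a single extra particle at $y$ typically triggers a long sequence of subsequent half-topplings throughout the graph, and the coupling must keep matching at every newly visited site rather than producing a growing discrepancy. This is also where the argument genuinely departs from the ARW case treated by Levine and Liang: because in SSM particles move in pairs, the relevant local symmetry is between the pairs of states $\{(1,0),(1,1)\} \leftrightarrow \{(2,0),(2,1)\}$ rather than the sleep-based symmetry of ARW, which is precisely why the Bernoulli parameter must be exactly $1/2$. Propagating this symmetry through the entire cascade is the technical heart of the proof, and likely relies on the active/sleeping particle viewpoint advertised in the abstract to organise the bookkeeping.
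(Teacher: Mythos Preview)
Your high-level framework is correct and matches the paper: show that the law $\nu$ of the stabilisation of $(\mathbbm{1}_V,h_0)$ is invariant under one step of the chain, by using the abelian property to reduce to comparing the stabilisations of $(\mathbbm{1}_V+\mathbbm{1}_{x^\star},h_0)$ and $(\mathbbm{1}_V,h_0)$.

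However, you are missing the one clean idea that makes the key step a two-line computation rather than a delicate cascade. The paper does \emph{not} do a site-by-site coupling with flips $h_0(y)\mapsto 1-h_0(y)$. Instead, starting from $(\eta_0+\mathbbm{1}_{x^\star},h_0)$ with $\eta_0\equiv 1$, it uses the following legal half-toppling sequence: always half-topple the site currently hosting the extra particle. This is legal because every site of $V$ already carries one particle, so the extra particle sits on a site with $\eta\geq 2$, hence unstable; half-toppling sends it to a neighbour chosen by the next unused instruction, and one iterates until it reaches the sink. The crucial observation is that this walk of the extra particle is determined \emph{only by the instruction array $I$}, hence is independent of $h_0$. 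After the extra particle exits, the configuration is $(\eta_1,h_1)$ with $\eta_1=\eta_0$ and $h_1\equiv h_0+n_0\pmod 2$, where $n_0(x)$ is the parity of the number of visits to $x$. Since $n_0\perp h_0$ and $h_0$ is uniform on $\{0,1\}^V$, the sum $h_0+n_0\pmod 2$ is again uniform, so $(\eta_1,h_1\bmod 2)\stackrel{d}{=}(\eta_0,h_0)$. Stabilising then gives $\nu$ on one side and $\nu Q$ on the other.

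So the ``cascade control'' you flag as the main obstacle simply does not arise: there is no growing discrepancy to track, because the whole effect of the extra particle on the odometer field is a single random shift $n_0$ independent of $h_0$, which the uniform law absorbs globally in one step. The active/sleeping viewpoint is not used in this proof.
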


The idea of this theorem relies on the following elementary observation: if we add one particle to the configuration~$(\eta_0,h_0)$ and make this particle walk with half-topplings until it exits from~$V$, then after this the parities of the odometer at the sites of~$V$ are still i.i.d.\ Bernoulli with parameter~$1/2$. This will be detailed in Section \ref{sec-proof-exactsamp}. 

As hinted above, Theorem \ref{thmstationarydist} remains valid for the driven–dissipative Markov chain in which the added particle is placed according to an arbitrary probability distribution on 
$V \cup \{s\}$ which is not supported on~$\{s\}$.
The same proof applies to this more general setting.
This implies in particular that the stationary distribution does not depend on the choice of this driving probability distribution.

\subsection{A new point of view on half-topplings}\label{sleeping-act not}
 We now introduce a different notation for the half-toppling dynamics defined in Section \ref{sec-half-topp}.  
 The idea is the following.
On the one hand, when we half-topple a site~$x$ such that~$\eta(x)=2$ and~$h(x)$ is even, then after the half-toppling we will have~$\eta(x)=1$ and~$h(x)$ odd, which means that this site remains unstable and we will have to half-topple it again in the future. If, on the other hand, $\eta(x)=2$ and~$h(x)$ is odd, then one half-toppling is sufficient to make~$x$ stable. For this reason, even though in SSM
particles can only be of one type, we can view $x$ as containing two ‘active’ particles in the
first case but as containing one ‘active’ and one ‘sleeping’ particle in the second. We therefore choose to represent a configuration~$(\eta,h)$ by a vector~$w: V \to \{\mathfrak{s}+n: n \in \mathbb{N}\} \cup \mathbb{N}$, with~$n<n+\mathfrak{s}<n+1$ for every~$n \in \mathbb{N}$, 
 that gives the number of active and sleeping particles at each site~$x \in V$ as follows. Assume that~$(\eta,h)$ is such that for every~$x \in V$, if~$\eta(x)=0$ then~$h(x)$ is even (otherwise the couple~$(\eta,h)$ represents a configuration which is impossible to reach with legal half-topplings). If~$\eta(x)=0$, we set~$w(x)=0$ (the site is empty). If~$\eta(x)=1$ and~$h(x)$ is odd, we set~$w(x)=1$, which means that there is one active particle on the site~$x$. When~$\eta(x)=1$ and~$h(x)$ is even, we set~$w(x)=\mathfrak{s}$, meaning that there is a sleeping particle on the site~$x$. More generally, if~$\eta(x)\geq 1$ and~$\eta(x) \equiv h(x) \pmod{2}$, we set~$w(x)=\eta(x)$, which means that there are~$\eta(x)$ active particles on the site~$x$; otherwise, if~$\eta(x)\geq 1$ and~$\eta(x) \not \equiv h(x) \pmod{2}$, we set~$w(x)=\eta(x)-1+\mathfrak{s}$ which means there are~$\eta(x)-1$ active particles and one sleeping particle on the site~$x$.  With this notation, a site is unstable when it contains at least one active particle, i.e., when~$w(x)\geq 1$. When we half-topple an unstable site~$x$, it means that~$w(x)\geq 1$ and an active particle jumps from~$x$ to a neighbour~$y$, where it falls asleep instantaneously, and as soon as a particle arrives at a site that had one sleeping particle (with maybe also active particles), both particles become active immediately.
Thus, with this notation the half-toppling operator~$I_{xy}$ would act on the configuration~$w$ as follows:
 \begin{equation*}
    I_{xy}w(z)=
    \begin{cases}
w(x)-1
&\quad\text{if }z=x,\\
w(y)+\mathfrak{s}
&\quad\text{if }z=y \ \text{and} \ y \neq s,\\
w(z) 
&\quad\text{otherwise,}
\end{cases}
\end{equation*}
with the convention that~$(n+\mathfrak{s})-1=n-1+\mathfrak{s}$, for every~$n\geq 1$, and~$(n+\mathfrak{s})+\mathfrak{s}=n+2$, for every~$n\geq 0$. 

Note that, with this new notation, the initial configuration in Theorem \ref{thmstationarydist} can be seen as each particle being active with probability~$1/2$, and sleeping with probability~$1/2$.

 In Section \ref{sec-micr-chain} we will use this notation to study the half-topplings dynamics in terms of active and sleeping particles.

\subsection{Outline of the paper}
In Section \ref{sec-proof-exactsamp}, we prove Theorem \ref{thmstationarydist} using the Abelian property (Lemma \ref{abelianproperty}).

In Section \ref{sec-micr-chain}, we will relate the evolution of the Markov chain~$(\xi_t)_{t\geq 0}$, which we call the \textit{macroscopic} chain, to the behaviour of an other chain called the \textit{microscopic chain}, which tracks the evolution of the number of active and sleeping particles during the stabilisation which follows the addition of one particle.

In Sections \ref{sec-proof-upperbound} and \ref{sec-proof-lowerbound}, we prove respectively the upper bound and the lower bound in Theorem \ref{mainthm2}. Our proof differs from what was done in~\cite{JMT} for the ARW and uses  the following strategy. Consider the complete graph with~$N-1$ vertices and with a distinguished vertex playing the role of the sink. We want to show that, as~$N \to \infty$, the fraction of occupied sites under the stationary distribution of the model concentrates around~$\frac{1}{2}$. 

One of the main ingredients of our strategy is the following lemma, whose proof is deferred to the Appendix \ref{sec-appendix}. 
\begin{lemma}\label{thmreturntime}
Consider, for every integer~$N\geq 1$, an irreducible time-homogeneous
Markov chain~$(Z_t)_{t\geq 0}$ with state space~$\{0,\,\ldots,\,N-1\}$ and
denote by~$\mathbb{P}_k^N$ its distribution when started from a
fixed~$k\in\{0,\,\ldots,\,N-1\}$, and let~$\nu^N$ be its stationary
distribution.

Let~$0<\alpha<\beta<\gamma\leq 1$ and make the two following hypotheses
on the family of Markov chains:

\begin{itemize}

\item[\namedlabel{hpH1}{(H1)}] (Positive drift below level~$\gamma N$) 
There exists a real random variable~$D$ with~$\mathbb{E}[D]>0$ such that,
for every~$N$ large enough, for every~$k\leq \gamma N$, under~$\mathbb{P}_k^N$,
the first jump~$Z_1-k$ stochastically dominates~$D$.

\item[\namedlabel{hpH2}{(H2)}](Long jumps backwards are rare)
There exists~$c>0$ such that, for every~$N$ large enough, for
every~$k\geq \gamma N$, we have~$\mathbb{P}_k^N(Z_1< \beta N )\leq e^{-cN}$.

\end{itemize}

Then, there exists~$c'>0$ such that, for~$N$ large
enough, $\nu^N\big(\{0,\,\ldots,\,\lfloor\alpha N\rfloor\}\big)\leq e^{-c'N}$.
\end{lemma}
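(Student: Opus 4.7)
The plan is an exponential Lyapunov estimate coupled to the stationarity identity $\nu^N(V)=\nu^N(PV)$, where we take $V(k)=e^{-\lambda k}$ for a small $\lambda>0$ to be chosen below and $(PV)(k):=\bbE_k^N[V(Z_1)]$. Hypothesis \ref{hpH1} will force $V$ to contract under $P$ on the low region $\{k\le\gamma N\}$, while \ref{hpH2} will restrict the leakage of $V$-mass from the high region $\{k>\gamma N\}$ back into the low region. The bound on $\nu^N(A)$ then comes out by the fact that $V$ is bounded below by $e^{-\lambda\alpha N}$ on $A$.

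For $k\le\gamma N$, stochastic domination of $Z_1-k$ by $D$ together with the monotonicity of $x\mapsto e^{-\lambda x}$ gives
\[
(PV)(k)=V(k)\,\bbE_k^N\bigl[e^{-\lambda(Z_1-k)}\bigr]\le V(k)\,\bbE\bigl[e^{-\lambda D}\bigr]=:\rho\,V(k).
\]
Since $\bbE[D]>0$, a Taylor expansion at $\lambda=0$ yields $\rho=1-\lambda\bbE[D]+O(\lambda^2)<1$ for $\lambda>0$ small (after possibly replacing $D$ by $D\vee(-N)$, which still stochastically dominates the jump because $Z_1-k\ge-(N-1)$ in our state space, so that the moment generating function of $D$ near $0$ makes sense). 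For $k>\gamma N$, splitting on the event $\{Z_1<\beta N\}$, whose probability is at most $e^{-cN}$ by \ref{hpH2}, and using $Z_1\ge 0$,
\[
(PV)(k)=\bbE_k^N\bigl[e^{-\lambda Z_1}\bigr]\le e^{-\lambda\beta N}+e^{-cN}.
\]

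Now stationarity gives $\nu^N(V-PV)=0$; splitting the sum at $\gamma N$ and using the two bounds above yields
\[
(1-\rho)\,\nu^N\bigl(V\mathbf 1_{\{k\le\gamma N\}}\bigr)\le \sum_{k>\gamma N}\nu^N(k)\,(PV)(k)\le e^{-\lambda\beta N}+e^{-cN}.
\]
Since $V(k)\ge e^{-\lambda\alpha N}$ on $A=\{0,\dots,\lfloor\alpha N\rfloor\}$, one concludes
\[
\nu^N(A)\le \frac{e^{-\lambda(\beta-\alpha)N}+e^{-(c-\lambda\alpha)N}}{1-\rho},
\]
and choosing $\lambda>0$ small enough that $\rho<1$ and $\lambda<c/\alpha$ produces the desired bound $\nu^N(A)\le e^{-c'N}$ with $c'=\tfrac12\min\{\lambda(\beta-\alpha),\,c-\lambda\alpha\}$.

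The only subtle point is guaranteeing $\rho<1$ from the bare hypothesis $\bbE[D]>0$, which a priori allows $D$ to have heavy negative tails making $\bbE[e^{-\lambda D}]$ infinite. The state-space truncation $D\mapsto D\vee(-N)$ described above circumvents this for $N$ large (combined with the fact that $\bbE[D\vee(-N)]\to\bbE[D]>0$ by dominated convergence); once this technicality is handled, all remaining steps are routine expansions and splittings.
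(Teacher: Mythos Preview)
Your proof is correct and follows a genuinely different route from the paper's. The paper uses the identity $\nu^N(k)=1/\bbE_k^N[T_k^+]$ and then lower-bounds the expected return time via two intermediate claims --- a positive probability of escaping from below $\alpha N$ to above $\gamma N$ before returning, and an exponentially long sojourn above $\alpha N$ once above $\gamma N$ --- each proved with an exponential supermartingale and optional stopping. You bypass the return-time machinery entirely: the single stationarity identity $\nu^N(V)=\nu^N(PV)$ with $V(k)=e^{-\lambda k}$, split at level $\gamma N$, delivers the bound directly. Your argument is shorter and more in the spirit of a Foster--Lyapunov drift criterion; the paper's route has the side benefit of producing hitting- and return-time estimates that could be of independent use.

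Two minor points. First, a slip of phrasing: when you write that $D\vee(-N)$ ``still stochastically dominates the jump'' you mean the reverse --- the jump $Z_1-k$ still dominates $D\vee(-N)$ --- which is indeed what your argument needs and what the justification $Z_1-k\ge-(N-1)$ actually shows. Second, your $N$-dependent truncation does not fully resolve the heavy-tail worry: with $D$ replaced by $D\vee(-N)$ the contraction factor becomes $\rho_N=\bbE\bigl[e^{-\lambda(D\vee(-N))}\bigr]$, which increases monotonically in $N$ to $\bbE[e^{-\lambda D}]$ and may diverge, so one does not obtain a uniform $\rho<1$. However, the paper's own proof carries exactly the same implicit assumption (it asserts that $f(a)=\bbE[e^{-aD}]$ satisfies $f'(0)<0$ and hence $f(a_0)\le 1$ for some $a_0>0$, which is vacuous if $f\equiv+\infty$ on $(0,\infty)$), and in every application in the paper $D$ has exponential or bounded negative tails, so the issue is moot in context.
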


We consider the driven-dissipative Markov chain defined in Section \ref{Sectiondrivendissipativemc} where, at each step~$t\geq 0$, we start with a configuration~$\eta_t \in \{0,1\}^{V_N}$, add one particle at a vertex uniformly chosen in~$V_N$ and let the dynamics evolve until we obtain a stable configuration~$\eta_{t+1}$. We denote by~$\xi_t$ the number of particles after~$t$ steps as defined in Section \ref{sec-stationary-density}.
When there are many particles in the initial configuration, we expect that the addition of a particle will destabilise the system and that, during stabilisation, a certain number of particles will leave the system jumping to the sink. When the number of particles is small enough, we expect that the particles tend to accumulate in the system.  In particular,
in Section \ref{sec-proof-lowerbound} we prove that if we start with~$\xi_0<\frac{N}{2}$ particles, $(\xi_t)_{t\geq 0}$ has a positive drift and the Conditions \ref{hpH1} and \ref{hpH2} are satisfied.
Moreover, in Section~\ref{sec-proof-upperbound} we prove that if we start with~$\xi_0 >\frac{N}{2}$ particles, then~$(\xi_t)_{t\geq 0}$ has a negative drift and the Conditions \ref{hpH1} and \ref{hpH2} are satisfied. For this side of the estimate we will use a reversed version of Lemma \ref{thmreturntime}, that is to say, we apply it to the chain~$N-1-\xi_t$.
Then, by Lemma~\ref{thmreturntime}, we obtain Theorem \ref{mainthm2}.
\section{Comments and perspectives}
\subsection{The model with generic instability threshold}
Consider a more general model in which an integer~$k$ is specified, and a site~$x$ is considered unstable if it contains at least~$k$ particles; otherwise, if~$x$ contains less than~$k$ particles,
it is stable. Each unstable site topples at rate~$1$, sending~$k$ of its particles to neighbours chosen uniformly and independently at random. The half-toppling dynamics (see Section \ref{sec-half-topp}) can be adapted for this model, where a site~$x$ is unstable in the configuration~$(\eta,h)$ if~$\eta(x)\geq k$ or~$\eta(x)\leq k-1$ and~$h(x) \not \equiv 0  \pmod{k}$, stable otherwise. It is easily seen that Theorem \ref{thmstationarydist} also holds for this model with starting configuration~$(\eta_0,h_0)$ such that~$\eta_0(x)=k-1$ for any~$x \in V$ and~$(h_0(x))_{x \in V}$ are independent uniform random variables on~$\{0,\,\dots,\,k-1\}$.

Regarding Theorem \ref{mainthm2},  the situation is different.  In this case what counts is the number of sites that have~$k-1$ particles in the initial configuration~$\eta_0 \in \{0,\,\dots,\,k-1\}^{V_N}$ (and not the total number of particles) because when a particle arrives at such a site it makes it unstable (if we use the notation sleeping/active particles, this means that~$k$ particles are activated). For this reason, what we can show is that the number of active particles has a positive drift if the number of sites with~$k-1$ sleeping particles is at least~$N/k$ (regardless of the number of initial active particles). More generally, we expect that, at stationarity, the number of sites with exactly~$\ell$ sleeping particles concentrates around~$N/k$ for each~$\ell \in \{0\,\dots,\,k-1\}$. This would imply that the stationary density $\rho_s$ in this case is equal to $\frac{k-1}{2}$.

\hspace{0.4cm}
\subsection{An alternative strategy based on exact sampling}
It is worth noting that to prove Theorem \ref{mainthm2} we could have used a method similar to that in~\cite{JMT} for ARW, using the exact
sampling result, Theorem \ref{thmstationarydist}. In this case, we should have started with an initial configuration~$(\eta_0,h_0)$ such that~$\eta_0(x)=1$ for any~$x \in V$ and~$(h_0(x))_{x \in V_N}$ are independent Bernoulli random variables of parameter~$\frac{1}{2}$ and performed legal half-topplings until no unstable sites remain in~$V_N$.
This means studying the stabilisation of the microscopic chain~$(X_t, Y_t)_{t\geq 0}$ (see Section \ref{sec-micr-chain}) when the initial configuration is such that~$X_0=N-1$ and~$Y_0$ is a Binomial random variable with~$N-1$ trials and probability of success~$\frac{1}{2}$. 
As in our strategy, if the total number of particles~$X_t$ and the number of active particles~$Y_t$ are large enough, then we expect that~$Y_t$ has a negative drift (Lemma \ref{prop2}); otherwise, if~$X_t$ is large enough but~$Y_t$ is small enough, then we expect that~$Y_t$ has a positive drift (Lemma \ref{prop1}). Let us denote by~$f(x,\,y)$ the expectation of~$\Delta Y_t$ conditioned on the event~$\{X_t=x,Y_t=y\}$. The idea of~\cite{JMT} is that for ARW, $f(x,\,y)$ is zero if the ratio between~$x$ and~$y$ satisfies a certain linear relation, i.e., if~$y = a x + c$, where~$c < 0$. 

If~$y$ is below this line, then~$f(x,y)$ is positive while if~$y$ is above this line, then~$f(x,y)$ is negative.
We expect the same for SSM. The system remains around the line~$y = a x + c$ until it reaches a stable configuration, when~$y=0$. Thus, Theorem \ref{thmstationarydist} tells us that the critical density of the model should be the value~$x = - c/ a$. For this reason, to find the critical value for SSM with this strategy we should determine the coefficients of the line~$y= a x + c$ for which~$f(x,y) = 0$.

Our alternative approach is more general because it is not necessary to know the exact sampling result.
\subsection{Critical window}
As done in \cite{JMT} for ARW, one could study the critical window around which the density of particles at stationarity concentrates also for SSM, in order to obtain a more precise estimate, but we restricted ourselves to computing the limiting density, to keep the presentation simple.
\section{Proof of Theorem \ref{thmstationarydist}: exact
sampling}\label{sec-proof-exactsamp}

Let~$Q$ be the transition matrix of the Markov chain~$(\eta_t)_{t\geq
0}$. As described in Theorem \ref{thmstationarydist}, we consider
the configuration~$(\eta_0, h_0)$ in which
$\eta_0(x)=1$ for any~$x \in V$ and~$(h_0(x))_{x \in V}$ are
independent Bernoulli random variables of parameter~$1/2$.
We consider~$I$ an array of instructions with distribution~$\mathcal{P}$,
independent of~$h_0$.
Let~$(\eta', h')$ be the configuration obtained after
stabilising~$(\eta_0, h_0)$, and let~$\nu$ be the distribution
of~$\eta'$. 
Our aim is to show that~$\nu=\pi$, the stationary distribution of the
Markov chain~$(\eta_t)_{t\geq 0}$.
To prove this, we want to show that~$\nu Q=\nu$, that is to say, if
$\nu$ is the initial
distribution, after one step of the chain we obtain again the distribution
$\nu$.
In other words, we want to show that if we add one extra particle to
the configuration~$(\eta', h')$ at a uniformly chosen vertex~$x^\star$
and if we call~$(\eta'_1, h'_1)$ the configuration obtained by
stabilising~$(\eta'+\mathbb{1}_{x^\star},h')$ via legal
half-topplings, then the distribution of~$\eta'_1$ (which
is~$\nu Q$ by definition\footnote{Note that by construction of the
odometer~$h'$, we have that~$h'(x)$
is even for every~$x\in V$, so that~$\eta'_1$ is distributed as
the configuration obtained by
stabilising~$(\eta'+\mathbb{1}_{x^\star}, 0)$ with a fresh array of
instructions, and thus indeed has distribution~$\nu Q$.}), is equal to~$\nu$.

First, notice that~$(\eta'_1, h'_1)$ is the configuration obtained when
legally stabilising~$(\eta_0+\mathbb{1}_{x^\star}, h_0)$.
Indeed, the legal sequence of half-topplings performed on~$(\eta_0,
h_0)$ to obtain~$(\eta', h')$ is \textit{a fortiori} also legal
for~$(\eta_0+\mathbb{1}_{x^\star}, h_0)$, and the resulting configuration
is~$(\eta'+\mathbb{1}_{x^\star}, h')$.
In other words, if we first stabilise~$(\eta_0+\mathbb{1}_{x^\star},
h_0)$ but disregarding the extra particle, we
obtain~$(\eta'+\mathbb{1}_{x^\star}, h')$, and if we then stabilise
with the extra particle present, we obtain~$(\eta'_1, h'_1)$.

Now, let us consider a different procedure to
stabilise~$(\eta_0+\mathbb{1}_{x^\star}, h_0)$.
First we only move the extra particle at~$x^\star$ until it leaves the system.
This happens almost surely after a finite number of steps, since the
graph is finite and connected and all vertices are occupied by a
particle.
Denote by~$(\eta_1,h_1)$ the obtained configuration and observe that
$\eta_1=\eta_0$, i.e., there is still one active particle on each
site.
We then stabilise~$(\eta_1,h_1)$.
By the Abelian property (Lemma~\ref{abelianproperty}), starting
from~$(\eta_0+\mathbb{1}_{x^\star},h_0)$, the final configurations
obtained with the two legal procedures of half-topplings are the same.
Therefore, when stabilising~$(\eta_1, h_1)$ we obtain~$(\eta'_1,
h'_1)$.

To sum up, on the one hand, the stabilisation of~$(\eta_0, h_0)$ gives
the configuration~$\eta'$,
which has distribution~$\nu$,
while on the other hand, the stabilisation of~$(\eta_1, h_1)$ gives~$\eta'_1$,
which has distribution~$\nu Q$ (see Figure~\ref{figure1}).
Recall that~$\eta_0=\eta_1$.
Thus, to show that~$\nu=\nu Q$, it suffices to prove that~$ h_1 \pmod{2}$ has the same distribution as
$h_0$.

For any~$x \in V$, let us denote by~$q_0(x) \in \{0,1\}$ the parity of
the number of visits made by the extra particle (which started at~$x^\star$) to site~$x$ before ending up
in the sink, so that~$h_1 \equiv h_0+q_0 \pmod{
2}$.
For any~$h \in \{0,1\}^V$, we have
\begin{align*}
   P\big(h_1 \equiv h \pmod{2} \big)
&= \sum_{ q \in \{0,1\}^V} P\big(h_1 \equiv h \pmod{2},
q_0=q\big)
   \\& =\sum_{ q \in \{0,1\}^V} P\big(h_0 \equiv  h+q  \pmod{2},
q_0=q\big)
   \\&= \sum_{ q \in \{0,1\}^V} P\big(h_0 \equiv h+q \pmod{2}\big)
P\big( q_0=q\big)
\,,
\end{align*}
where the last equality follows from the fact that~$q_0$ is independent of
$h_0$.
Then, using that~$h_0$ is
uniform in~$\{0,1\}^V$, this simplifies to
\[
P\big(h_1 \equiv h \pmod{2} \big)
= \sum_{ q \in \{0,1\}^V} \frac{1}{2^{|V|}}P(q_0=q)  
   =\frac{1}{2^{|V|}}
 =P(h_0=h)
\,.
\]
 Since~$\eta_1=\eta_0$, we have that the configuration~$(\eta_1,h_1 \pmod{2})$ has the same distribution as~$(\eta_0,h_0)$. Thus,
the distribution~$\nu Q$, which is obtained after stabilisation
of~$(\eta_1, h_1)$, is equal to the distribution~$\nu$, which is
obtained after stabilisation of~$(\eta_0, h_0)$.
Since the chain is irreducible, the stationary distribution is unique
and therefore~$\nu=\pi$.

\begin{figure}
\centering
\begin{tikzpicture}[>=Stealth, every node/.style={font=\small}]

 
  \node (B2) at (1,-3) {$(\eta', h')$};
  \node (C2) at (1,1) {$(\eta_0, h_0)$};

  \draw[<-] (B2) -- (C2) node[midway, right] {stabilise};

  \node (A1) at (6,1) {$(\eta_0+\mathbb{1}_{x^\star}, h_0)$};
  \node (B1) at (12,1) {$(\eta_1, h_1)$};
  \node (C1) at (12,-3) {$(\eta'_1,h'_1)$};
  \node (D1) at (6,-3) {$(\eta'+\mathbb{1}_{x^\star}, h')$};

  \draw[->] (A1) -- (B1) node[midway, above] {force the extra particle to leave};
  \draw[->] (B1) -- (C1) node[midway, right] {stabilise};
  \draw[<-] (C1) -- (D1) node[midway, below] {stabilise};
  \draw[<-] (D1) -- (A1) node[midway, above, sloped, align=left] {stabilise, disregarding \\the extra particle};
 
\end{tikzpicture}
\caption{The two different stabilisation procedures of the configuration $(\eta_0+\mathbb{1}_{x^*},h_0)$ used in the proof of Theorem \ref{thmstationarydist}.}
\label{figure1}
\end{figure}
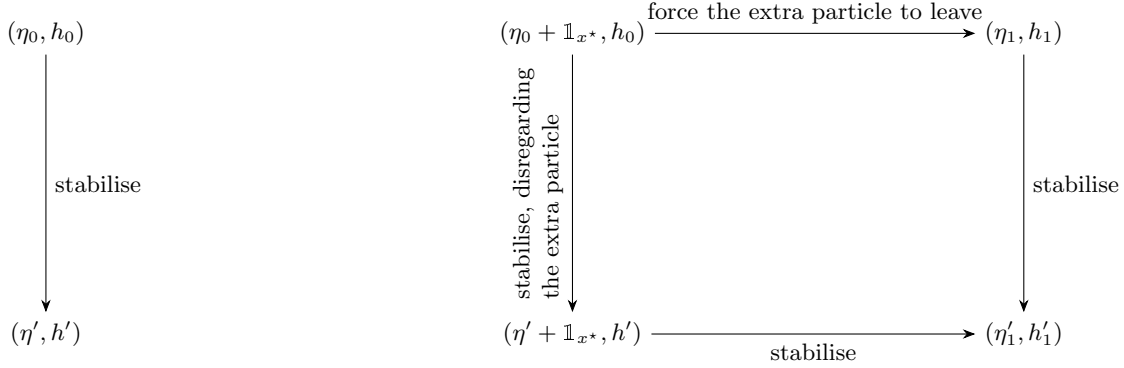

 \section{The microscopic chain of sleeping/active particles}
\label{sec-micr-chain}

Let~$N\geq 1$ and let~$G_N=(V_N \cup \{s_N\}, E_N)$ be the complete
graph with self-loops, $|V_N|=N-1$ vertices and a sink vertex~$s_N$.
Recall that one step of the chain~$(\xi_t)_{t\geq 0}$ defined in
Section \ref{sec-stationary-density} consists of adding one particle
to a uniformly chosen vertex of a stable configuration and using a
legal sequence of half-topplings that stabilises the configuration
obtained.
Let us call this Markov chain~$(\xi_t)_{t\geq 0}$ the \textit{macroscopic chain}.

In this section, we relate one step of this chain to the
behaviour of another Markov chain~$(X_t, Y_t)_{t\geq 0}$ that we call
the \textit{microscopic chain}.
This chain describes the stabilisation after adding one particle,
using the new representation
of the half-toppling dynamics introduced in Section \ref{sleeping-act
not}, with active and sleeping particles.
In this chain~$(X_t, Y_t)_{t\geq 0}$, the variable~$X_t$ is the total
number of particles,
regardless of their state, after~$t$ steps, and~$Y_t$ is the number
of active particles.
This chain is formally defined as follows:

\begin{definition}\label{def-micro-chain}
Fix an integer~$N\geq 1$.
Consider a Markov chain~$(X_t, Y_t)_{t\geq 0}$ with state
space
\[
S=
\big\{
(x,y)\in\mathbb{N}^2\,:\,
y\leq x
\text{ and }
x-y\leq N-1
\big\}
\]
and the following transition probabilities: for every~$(x,y)\in S$
with~$y\neq 0$,
\begin{align}
(x,y)
&\ \longrightarrow\ 
(x-1,\,y-1)
\text{ with probability }
\frac 1 N,
\,\label{jump-to-sink}
\\
(x,y)
&\ \longrightarrow\ 
(x,\,y+1)
\text{ with probability }
\frac {x-y} N,
\,\label{jump-to-sleeping}\\
(x,y)
&\ \longrightarrow\ 
(x,\,y-1)
\text{ with probability }
\frac {N-1-x+y} N,
\,\label{jump-to-active}
\end{align}
and the states~$(x,y)\in S$ with~$y=0$ being absorbing states.
We call this chain~$(X_t,Y_t)_{t\in \mathbb{N}}$ the \textit{microscopic chain}.
Given~$k,j \in \mathbb{N}$, we denote by~$P^N_{k,j}$ and~$E^N_{k,j}$ the law
and expectation, respectively, relative to this chain when started at~$X_0=k,\,Y_0=j$.
\end{definition}

The connection between the macroscopic chain and the microscopic chain is formalised in the following result.

\begin{lemma} \label{conn-micr-macr}
For every integer~$N\geq 1$, the jump probabilities of the macroscopic
chain are given by:
\[
\forall k\in\{0,\,\ldots,\,N-1\} \quad \forall \ell \in \{0,\,\ldots,\, k+1\}
\qquad 
\mathbb{P}_k^N(\xi_1=\ell)
\ =\ 
P_{k+1,1}^N(X_{\tau_0}=\ell)
\,,
\]
where~$\tau_0$ is the stopping time defined by~$\tau_0=\inf\{t\geq 0: Y_t=0\}$.
\end{lemma}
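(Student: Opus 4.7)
My plan is to prove the identity by viewing one step of the macroscopic chain as the successive application of (a) the very first transition of the microscopic chain started from $(X_0,Y_0)=(k+1,1)$, and (b) the stabilisation dynamics that follows. Starting from a stable configuration with $k$ particles in $V_N$, adding one particle uniformly to $V_N\cup\{s_N\}$ gives three possible outcomes: with probability $1/N$ the sink is picked and $\xi_1=k$; with probability $(N-1-k)/N$ an empty non-sink site is picked and $\xi_1=k+1$ without any stabilisation needed; with probability $k/N$ an already occupied site is picked, producing a configuration with $k+1$ particles, two of which are active and sitting at the chosen site. In the sleeping/active language of Section~\ref{sleeping-act not}, the three outcomes correspond respectively to reaching the stable states $(k,0)$ and $(k+1,0)$, and to entering the unstable state $(k+1,2)$.

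These probabilities coincide exactly with the three possible transitions out of $(k+1,1)$ in the microscopic chain of Definition~\ref{def-micro-chain}: the transition $(k+1,1)\to(k,0)$ with probability $1/N$, the transition $(k+1,1)\to(k+1,0)$ with probability $(N-1-k)/N$, and the transition $(k+1,1)\to(k+1,2)$ with probability $k/N$. The first two land at absorbing states ($Y=0$) and yield $X_{\tau_0}=k$ and $X_{\tau_0}=k+1$ respectively, matching the two stable outcomes of the macroscopic step. The third transition delivers the chain to the state $(k+1,2)$, which is precisely the physical state after adding the particle to an occupied site. It therefore remains to show that, from any such intermediate configuration, the subsequent stabilisation on $G_N$ produces the same distribution for the pair $(X_t,Y_t)$ as the microscopic chain.

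The remaining verification is a direct case analysis at a single unstable site $x$. Among the $N$ equally likely destinations of an active particle leaving $x$, exactly one is the sink (yielding $(X,Y)\to(X-1,Y-1)$), exactly $X-Y$ are vertices carrying a sleeping particle, each of which gets activated by the arrival and yields $(X,Y)\to(X,Y+1)$, and the remaining $N-1-(X-Y)$ destinations simply put one active particle to sleep, yielding $(X,Y)\to(X,Y-1)$. The one point that requires care, and which I expect to be the main obstacle in the writeup, is the self-loop at $x$: depending on whether $x$ itself bears a sleeping particle, the self-loop contributes either to the activation tally (if yes, it wakes the sleeping particle at $x$) or to the fall-asleep tally (if no, one of the active particles at $x$ becomes sleeping there), but a direct check using the conventions $(n+\mathfrak{s})-1+\mathfrak{s}=n+1$ and $n-1+\mathfrak{s}$ respectively shows that the three total counts $1$, $X-Y$ and $N-1-X+Y$ are the same in both subcases. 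Once this is established, the Abelian property (Lemma~\ref{abelianproperty}) guarantees that the distribution of the final stable configuration, and hence of $\xi_1=X_{\tau_0}$, does not depend on the order in which the legal half-topplings are performed, which concludes the identification.
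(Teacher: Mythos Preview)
Your proposal is correct and follows essentially the same route as the paper: decompose one macroscopic step into the first transition of the microscopic chain from $(k+1,1)$ (matching the three cases sink/empty/occupied with probabilities $1/N$, $(N-1-k)/N$, $k/N$) and then verify that the subsequent half-toppling dynamics on $G_N$ has exactly the transition probabilities of Definition~\ref{def-micro-chain}. Your explicit treatment of the self-loop case is a useful addition that the paper leaves implicit.
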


The idea of Lemma~\ref{conn-micr-macr} is the following. A single step of the macroscopic chain starting from a configuration with $k$ particles consists in adding one particle at a uniformly chosen site and then letting the system evolve until stabilisation.
This is equivalent to starting the microscopic chain from a configuration with $k+1$ particles of which exactly one is active and letting the dynamics evolve until no active particles remain. Indeed, starting from such a configuration, the first step of the microscopic chain consists in making this unique active particle jump to a uniformly chosen site.

In particular, if we had defined the macroscopic chain by adding the
particle on a uniform site of~$V$ instead of a uniform site
in~$V\cup\{s\}$, then in equation~\eqref{one-step-macroscopic} below we would not
recover exactly the
stabilisation of the microscopic chain~$(X_t,Y_t)_{t\geq 0}$ with
initial configuration~$X_0=k+1$ and~$Y_0=1$, since during the first step the unique active particle could jump directly to the sink.
This is the reason for our choice to add the particle on~$V\cup\{s\}$,
so that this added particle coming from nowhere behaves as if it was
an active particle coming from the configuration itself, as explained above.
\begin{proof}
Let~$N\geq 1$ and let~$k \in \{0,\,\dots,\,N-1\}$ and~$\ell \in
\{0,\,\ldots,\, k+1\}$.
Let us show that
\begin{align}
\begin{split}
\label{one-step-macroscopic}
\mathbb{P}_k^N(\xi_1=\ell)
\ & =\ 
\frac 1 N
\mathbb{1}(\ell=k)
+\frac {N-1-k} N
\mathbb{1}(\ell=k+1)
+\frac k N
P_{k+1,2}^N(X_{\tau_0}=\ell)
\\& =\ 
P_{k+1,1}^N(X_{\tau_0}=\ell) 
\,.
\end{split}
\end{align}
First observe that the second equality in~\eqref{one-step-macroscopic} simply
follows from Markov's property applied to the microscopic chain at
time~$t=1$.
Hence, we only have to show the first equality.

Recall that, as defined
in Section \ref{Sectiondrivendissipativemc}, one step of the
macroscopic chain begins by adding one particle at a vertex uniformly
chosen in~$V_N \cup \{s_N\}$.
Starting with~$k$ sleeping particles, with probability~$\frac{1}{N}$
(resp.~$\frac{N-1-k}{N}$) this vertex
is the sink (resp.\ this vertex was empty) and thus we have~$\xi_1 =k$
(resp.~$\xi_1=k+1$).
This explains the first two terms in~\eqref{one-step-macroscopic}.

Otherwise, with probability~$\frac{k}{N}$, this vertex already hosted a
particle, thus we obtain one unstable site with two particles, and we
let the dynamics evolve until we obtain a new stable
configuration.
Using the representation
introduced in Section~\ref{sleeping-act not}, this means that we start
with~$k-1$ sites containing one sleeping
particle, and one site containing two active particles (because this
site contains two particles and has zero odometer).
Hence, we start with~$X_0=k+1$ and~$Y_0=2$.

Then, to obtain the third term in~\eqref{one-step-macroscopic}, there
only remains to
show that, during the stabilisation of this configuration using an
arbitrary legal half-toppling procedure, the number of
particles~$X_t$ and the number of active particles~$Y_t$ evolve
according to the microscopic chain defined above.

At each step~$t\geq 0$ of the stabilisation procedure, if there
are~$X_t$ particles of whom~\smash{$Y_t>0$} are active, an
active particle jumps to a uniformly chosen site.

\begin{itemize}
\item
With probability~$1/N$, this site is the sink, in which case both the
number of active particles and the total number of particles decrease
by~$1$, whence the jump probability~\eqref{jump-to-sink}.

\item
With probability~$(X_t-Y_t)/N$, the particle jumps to a site occupied
by a sleeping particle (and
possibly by active particles as well), and both particles are woken up, as
explained in Section~\ref{sleeping-act not}.
Thus, the number of active particles increases by~$1$,
whence~\eqref{jump-to-sleeping}.

\item
With probability~$(N-1-X_t+Y_t)/N$, the particle jumps to a non-sink
vertex that does not contain sleeping particles (but may or may not
contain active particles) where it falls asleep instantaneously.
Thus, the number of active particles decreases by~$1$.
This gives the last transition probability~\eqref{jump-to-active}.
\end{itemize}

Thus, we conclude that the total number of particles and the number of
active particles indeed evolve according to the microscopic Markov chain
given in Definition~\ref{def-micro-chain}, which concludes the proof
of the first equality in~\eqref{one-step-macroscopic}, and thereby of
the lemma.
\end{proof}

\section{Proof of the upper bound in Theorem \ref{mainthm2}}\label{sec-proof-upperbound}

We wish to apply Lemma \ref{thmreturntime} to the chain~$(\xi_t)_{t\geq 0}$ with~$\frac{1}{2}<\gamma<\beta< 1$. More precisely, we use a reversed version of Lemma \ref{thmreturntime}, or, in other words, we apply Lemma \ref{thmreturntime} to the chain~$(N-1-\xi_t)_{t\geq 0}$. Since the upward steps of~$\xi_t$ are at most of size~$1$,  if~$\xi_0\leq \gamma N$, then~$\xi_1\leq \gamma N+1<\beta N$, for~$N$ large enough. Thus, the Condition \ref{hpH2} is satisfied.

We now verify Condition \ref{hpH1}. That is, we aim to find a real random variable~$D$ with negative mean such that, for every~$N$ large enough and for any~$k \geq \gamma N$, then under~$\mathbb{P}^N_k$ the first jump~$\xi_1-k$ is stochastically dominated by~$D$.
The following lemma tells us that, if we start with ~$k \geq\gamma N$ particles, with positive probability at least~$\delta N$ particles, for some~$\delta \in (0,\gamma-\frac{1}{2})$, leave the system in one step of the chain~$(\xi_t)_{t\geq 0}$.
\begin{lemma}\label{propupperbound2}
For any~$\gamma>\frac{1}{2}$ and for any~$\delta \in (0,\gamma-\frac{1}{2})$, there exists~$C>0$ such that for any~$N$ large enough and for any~$k\geq \gamma N$, we have 
\begin{equation}
\mathbb{P}^N_k\left(\xi_1\leq k-\delta N\right)>C.\label{equpperbound}
\end{equation}

\end{lemma}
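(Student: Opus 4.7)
The plan is to reduce the statement to the microscopic chain of Section~\ref{sec-micr-chain} and exploit the positive drift of the active-particle count $Y_t$ when there are many sleeping particles. By Lemma~\ref{conn-micr-macr}, we have $\bbP_k^N(\xi_1\le k-\delta N) = P_{k+1,1}^N(X_{\tau_0}\le k-\delta N)$. Since $X_t$ decreases by exactly one at each sink jump (which occurs with probability $1/N$ at each step of the microscopic chain) and is otherwise constant, the quantity $X_0-X_{\tau_0}$ counts the number of sink jumps performed before absorption. The problem thus reduces to showing that, starting the microscopic chain at $(X_0,Y_0)=(k+1,1)$, the chain performs at least $\delta N+1$ sink jumps before $Y_t$ hits $0$, with probability bounded below uniformly in $N$.

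The key observation is that the drift of $Y_t$ at state $(x,y)$ equals $(2(x-y)-N)/N$. Setting $\eta:=\gamma-\delta-1/2>0$ and $L:=\lfloor \eta N/2\rfloor$, this drift is at least $\eta$ whenever $x\ge (\gamma-\delta)N$ and $y\le L$. I would split the analysis into two phases. \emph{Bootstrap phase:} starting from $(k+1,1)$, I would condition on the event that no sink jump occurs during the first $\lceil CN\rceil$ steps (which has probability at least $e^{-2C}$ for $N$ large); under this conditioning $X$ stays equal to $k+1$, and $Y$ evolves as a birth-death chain on $\{0,\ldots,L\}$ with drift bounded below by $\eta+\delta>0$, so standard gambler's ruin gives a positive-constant lower bound on the probability that $Y$ reaches $L$ before $0$ within these $CN$ steps, provided $C$ is chosen large enough. \emph{Persistence phase:} starting from $(k+1,L)$, I would use an exponential supermartingale of the form $M_t=e^{-\lambda Y_t}$ with small $\lambda>0$. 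In the region $\{y\le L,\, x\ge(\gamma-\delta)N\}$, a direct Taylor expansion shows that $M_t$ is a genuine supermartingale, and optional stopping yields $P(Y \text{ hits } 0 \text{ during an excursion below } L)\le e^{-\lambda L}=e^{-cN}$. Iterating this bound over the (at most $O(N^2)$) excursions below $L$ during the first $\delta N^2$ steps keeps the total failure probability $o(1)$; meanwhile the number of sink jumps during these $\delta N^2$ steps, being Binomial$(\delta N^2,1/N)$, exceeds $\delta N+1$ with probability $1-o(1)$. Combining the two phases via the strong Markov property at the first hitting time of $L$ then yields the desired positive-constant lower bound.

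The main obstacle I anticipate is the coupling between the slow decay of $X$ (which loses one particle every $N$ steps on average) and the faster fluctuations of $Y$ around the dynamic equilibrium $\approx X-N/2$. Because the drift of $Y$ depends on the current value of $X$, the exponential-martingale bound in the persistence phase is only valid as long as $X$ has not left the region $\{x\ge(\gamma-\delta)N\}$. However, since the lemma requires only a positive-constant lower bound rather than a high-probability estimate, it suffices to restrict the analysis to the good event $\{X_t\ge(\gamma-\delta)N \text{ for all } t\le \delta N^2\}$, on which the two dynamics essentially decouple and the argument runs cleanly.
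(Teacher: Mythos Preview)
Your two-phase structure (bootstrap $Y$ from~$1$ to order~$N$ with positive probability, then exploit the positive drift to keep $Y$ alive long enough for many sink jumps) is the same skeleton the paper uses, and the exponential-supermartingale ingredient is identical.  However, the persistence phase as you have written it contains a genuine gap.

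First, a minor point: the number of sink jumps in $\delta N^2$ steps is $\mathrm{Bin}(\delta N^2,\,1/N)$, whose mean is \emph{exactly}~$\delta N$; by the CLT it exceeds $\delta N+1$ with probability tending to~$1/2$, not $1-o(1)$.  This alone is harmless, since a positive constant suffices.

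The real problem is your ``good event'' restriction $\{X_t\geq(\gamma-\delta)N\text{ for all }t\leq\delta N^2\}$.  On this event the number of sink jumps by time~$\delta N^2$ is at most $X_0-(\gamma-\delta)N=k+1-(\gamma-\delta)N$, which for $k=\lceil\gamma N\rceil$ is essentially $\delta N+1$.  So conditionally on the good event, the event ``sink jumps $>\delta N$'' forces the Binomial to land in a window of width~$O(1)$, which has probability~$O(N^{-1/2})$, not a positive constant.  In other words, the good event you restrict to is \emph{anti-correlated} with the event you want.

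The fix is immediate once you notice that leaving the good region is itself a win: if $\tau^\star:=\inf\{t:X_t\leq(\gamma-\delta)N\}$ occurs before $\tau_0$, then $X_{\tau_0}\leq(\gamma-\delta)N\leq k-\delta N$ automatically.  So instead of restricting to $\{\tau^\star>\delta N^2\}$, you should decompose on $\{\tau^\star<\tau_0\}$ versus $\{\tau^\star\geq\tau_0\}$; on the latter event your supermartingale is valid for \emph{all} $t<\tau_0$, and the excursion argument gives $P(\tau^\star\geq\tau_0)=o(1)$ from $(k',L)$.  This is precisely the paper's route (its Lemmas~\ref{prop1} and~\ref{prop2}): it proves directly that $P_{k+1,1}^N(\tau^\star\leq\tau_0)>c$, bypassing any fixed time horizon and the Binomial count altogether.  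An equivalent repair of your version is to introduce a slack~$\delta'\in(\delta,\gamma-\tfrac12)$ and work in the region $\{X\geq(\gamma-\delta')N\}$, which leaves room of order~$N$ between the sink-jump constraint and the target.
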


This lemma implies Condition \ref{hpH1}, because, if~$C$ is given by the lemma and~$N_0$ is large enough so that \eqref{equpperbound} holds for every~$N\geq N_0$ and so that~$1-C-C\delta N_0<0$, then the Condition \ref{hpH1} holds for every~$N\geq N_0$ with drift variable~$D$ given by 
\begin{equation*}
    P(D=-\delta N_0)=C \quad \text{and} \quad P(D=1)=1-C,
\end{equation*}
using the fact that~$\xi_1-\xi_0\leq 1$ almost surely.

We deduce that the Conditions \ref{hpH1} and \ref{hpH2} necessary to apply Lemma \ref{thmreturntime} are satisfied. Then, using this lemma we have that for any~$\alpha>\frac{1}{2}$, there exists~$c>0$ such that, $\pi_N\left(\{\lfloor\alpha N \rfloor,\,\dots,\,N\}\right)\leq e^{-cN}$. Thus, to obtain the upper bound in Theorem \ref{mainthm2} there only remains to show Lemma \ref{propupperbound2}.

\subsection{Proof of Lemma \ref{propupperbound2}: many particles exit in one step}\label{sec-proof-propupperbound}
    
We want to show that if we start with~$k\geq \gamma N$ particles, then with positive probability in one step of the chain~$(\xi_t)_{t\geq 0}$ at least~$\delta N$ particles leave the system. Let us consider the dynamics~$(X_t,Y_t)_{t\geq 0}$ defined in Section \ref{sec-micr-chain}. 
By Lemma \ref{conn-micr-macr}, we set~$X_0=k+1,\,Y_0=1$ and let the system evolve according to the dynamics described in Section \ref{sec-micr-chain} up to time~$\tau_0$.
We are going to show that when~$X_t$ is large enough and~$Y_t$ is not too large, the number of active particles~$Y_t$ has a positive drift. In particular, if initially the total number of particles is large enough and the number of active particles is sufficiently small, we expect that, with positive probability, during the procedure stopped at time~$\tau_0$ the jumps of active particles imply the activation of many sleeping particles or the exit of at least~$\delta N$ particles from the system. This idea is stated in Lemma \ref{prop1}.

Fix~$\gamma>\frac{1}{2},\,\delta \in (0, \gamma-\frac{1}{2})$ and define~$\tau^\star=\inf\{t\geq 0: X_t\leq \left(\gamma-\delta\right)N\}$ with the convention that~$\tau^\star=\infty$ if this set is empty. Let us define, for any~$j \geq 0$, \smash{$\tau_{\geq j}=\inf\{t\geq 0: Y_t\geq j\}$}.

\begin{lemma} \label{prop1} 
For any~$\gamma>\frac{1}{2},\,\delta \in (0,\gamma-\frac{1}{2})$ and for any~$\varepsilon\in (0, \gamma-\delta-\frac{1}{2})$, there exists a constant~$c>0$ such that for any~$N$ large enough and~$k\geq \gamma N$,
    \begin{equation*}
        P^N_{k,1}\left(\{\tau_{\geq \varepsilon N}<\tau_0\} \cup \{\tau^\star<\tau_0\}\right)>c.
    \end{equation*}
\end{lemma}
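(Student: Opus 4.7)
The plan is to show that, as long as $t$ stays below the stopping time $T:=\tau_0 \wedge \tau^\star \wedge \tau_{\geq \varepsilon N}$, the probability that $Y$ increases at step $t$ is bounded below by a constant strictly greater than $1/2$, and then to dominate $(Y_t)$ pathwise by a biased $\pm 1$ walk and invoke gambler's ruin. From Definition~\ref{def-micro-chain}, conditionally on $X_t=x$ and $Y_t=y$ with $y\geq 1$, we have $\D Y_t\in\{-1,+1\}$ with $P(\D Y_t=+1\mid X_t,\,Y_t)=(X_t-Y_t)/N$. On $\{t<T\}$ we have $X_t>(\gamma-\delta)N$ and $Y_t<\varepsilon N$, so $(X_t-Y_t)/N>\tilde p:=\gamma-\delta-\varepsilon$, and the hypothesis $\varepsilon<\gamma-\delta-\tfrac12$ ensures $\tilde p>\tfrac12$.

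I would then construct a coupling as follows. Drive the $Y$-transitions by i.i.d.\ uniform variables $U_t\in[0,1]$: set $\D Y_t=+1$ iff $U_t\leq (X_t-Y_t)/N$, and split the event $\{U_t>(X_t-Y_t)/N\}$ into the sink and non-sink down-moves with the correct conditional probabilities using further independent randomness. Using the same $(U_t)$, introduce an auxiliary walk $(\tilde Y_t)_{t\geq 0}$ on $\Z$ with $\tilde Y_0=1$ and $\D\tilde Y_t=+1$ iff $U_t\leq\tilde p$, and $-1$ otherwise. The inequality $\tilde p\leq (X_t-Y_t)/N$ on $\{t<T\}$ guarantees that every up-step of $\tilde Y$ before $T$ is also an up-step of $Y$ and every down-step of $Y$ before $T$ is also a down-step of $\tilde Y$; hence $Y_t-\tilde Y_t$ is non-decreasing on $[0,T]$, and since both start at $1$ we obtain $\tilde Y_t\leq Y_t$ for all $t\leq T$.

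Finally, on the event $\{T=\tau_0\}$ one has $\tilde Y_{\tau_0}\leq Y_{\tau_0}=0$ and $\tilde Y_t\leq Y_t<\varepsilon N$ for every $t<\tau_0$, so the auxiliary walk $(\tilde Y_t)$ reaches $0$ before $\lceil\varepsilon N\rceil$. The classical gambler's ruin formula for a biased $\pm 1$ walk with up-probability $\tilde p>\tfrac12$ started at $1$ gives
\[
P(\tilde Y \text{ hits } 0 \text{ before } \lceil\varepsilon N\rceil)\ =\ \frac{(1-\tilde p)/\tilde p\;-\;\big((1-\tilde p)/\tilde p\big)^{\lceil\varepsilon N\rceil}}{1-\big((1-\tilde p)/\tilde p\big)^{\lceil\varepsilon N\rceil}}\ \xrightarrow[N\to\infty]{}\ \frac{1-\tilde p}{\tilde p}\ <\ 1,
\]
so $P^N_{k,1}(T\neq\tau_0)\geq(2\tilde p-1)/\tilde p - o_N(1)$, bounded below by a positive constant for $N$ large. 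Since $\{T\neq\tau_0\}=\{\tau^\star<\tau_0\}\cup\{\tau_{\geq\varepsilon N}<\tau_0\}$, this proves the lemma. The delicate point is verifying that the coupling bound $\tilde Y_t\leq Y_t$ truly propagates up to the random time $T$ (i.e.\ that the inequality $\tilde p\leq(X_t-Y_t)/N$ is in force for every $t<T$); once that is established, the gambler's ruin estimate is entirely standard.
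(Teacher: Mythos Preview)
Your argument is correct. The key observation---that on $\{t<T\}$ the up-probability $(X_t-Y_t)/N$ exceeds the fixed constant $\tilde p=\gamma-\delta-\varepsilon>\tfrac12$---is exactly the one the paper uses, but you package it differently. The paper builds the exponential supermartingale $M_t=e^{-cY_t}\mathbbm{1}(t<\tau)$ with $\tau=\tau^\star\wedge\tau_{\geq\varepsilon N}$, checks that $E[e^{-c\Delta Y_t}\mid\cF_t]\leq 1$ on $\{t<\tau\}$ for $c$ small enough, and applies optional stopping at $\tau'=\tau\wedge\tau_0$ to get $P(\tau_0\leq\tau)\leq e^{-c}$. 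You instead couple $(Y_t)$ pathwise with a biased $\pm1$ walk $(\tilde Y_t)$ of parameter $\tilde p$ and invoke gambler's ruin. Your coupling inequality $\tilde Y_t\leq Y_t$ does propagate up to time $T$, since the bound $(X_t-Y_t)/N>\tilde p$ holds at every step $t<T$ and both processes move by $\pm1$; and $T<\infty$ a.s.\ because the microscopic chain absorbs at $\{y=0\}$ in finite time. Your approach is arguably more elementary and yields an explicit constant $(2\tilde p-1)/\tilde p$, whereas the supermartingale method is more uniform with the rest of the paper (the same device is recycled in Lemmas~\ref{propmin} and~\ref{prop4} and in the Appendix), which is presumably why the authors chose it.
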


If during the procedure less than~$\delta N$ particles have left the system and there are at least~$\varepsilon N$ active particles, then we expect that, with large probability, many particles exit the system before reaching a stable configuration. 

\begin{lemma} \label{prop2}
For any ~$\gamma>\frac{1}{2},\,\delta \in (0,\gamma-\frac{1}{2})$ and~$\varepsilon\in (0,\gamma-\delta-\frac{1}{2})$, there exists a constant~$c>0$ such that for any~$N$ large enough and~$k\geq (\gamma-\delta) N$, 
    \begin{equation*}
        P^N_{k,\lceil \varepsilon N\rceil}( \tau^\star>\tau_0)\leq e^{-c N}.
    \end{equation*}
\end{lemma}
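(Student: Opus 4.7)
The plan is to split the event $\{\tau^\star>\tau_0\}$ into two pieces: either $Y_t$ hits zero too soon, or both $\tau_0$ and $\tau^\star$ take longer than some fixed time $T=\Theta(N^2)$. The first piece will be controlled by a positive-drift / gambler's-ruin argument for $Y$, the second by a Chernoff estimate on the number of sink transitions. To start, I would pick $\varepsilon'\in(\varepsilon,\,\gamma-\delta-\tfrac12)$ and set $\mu=\gamma-\delta-\tfrac12-\varepsilon'>0$. The core observation is that whenever $t<\tau^\star$ and $Y_t\leq \lceil\varepsilon' N\rceil$, the transition probability~\eqref{jump-to-sleeping} satisfies
\[
P^N_{k,\lceil\varepsilon N\rceil}\big(\Delta Y_t=+1\,\big|\,\cF_t\big)
\ =\ \frac{X_t-Y_t}{N}
\ \geq\ \frac{(\gamma-\delta)N-\lceil\varepsilon' N\rceil}{N}
\ \geq\ \tfrac12+\tfrac\mu2
\]
for $N$ large. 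Thus, inside the ``good band'' $\{X_t\geq(\gamma-\delta)N,\,Y_t\leq\lceil\varepsilon' N\rceil\}$, the process $Y$ stochastically dominates a biased simple random walk with bias $\mu/2$ toward $+1$.

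Fix $T=C_0N^2$ with $C_0>2(1-\gamma+\delta)$ and write
\[
P^N_{k,\lceil\varepsilon N\rceil}(\tau^\star>\tau_0)
\ \leq\ P^N_{k,\lceil\varepsilon N\rceil}(\tau^\star>T,\,\tau_0>T)
\,+\,P^N_{k,\lceil\varepsilon N\rceil}(\tau_0\leq T,\,\tau_0<\tau^\star).
\]
For the first term, the key fact is that at every step with $Y_t>0$ the sink transition~\eqref{jump-to-sink} has conditional probability exactly $1/N$, irrespective of $(X_t,Y_t)$. Extending the chain past $\tau_0$ by fresh independent $\mathrm{Bernoulli}(1/N)$ indicators, the total number $S$ of sink transitions in the first $T$ steps is $\mathrm{Bin}(T,1/N)$, with mean $C_0 N$. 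On the event $\{\tau^\star>T,\,\tau_0>T\}$ one necessarily has $S<k-(\gamma-\delta)N\leq(1-\gamma+\delta)N<C_0N/2$, and a standard Chernoff bound gives that this has probability at most $e^{-cN}$.

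For the second term, I would decompose $[0,\tau_0]$ into the successive excursions of $Y$ below the level $\lceil\varepsilon' N\rceil$. On $\{\tau_0<\tau^\star\}$, each such excursion lies entirely in the good band, so by the stochastic domination above and the standard gambler's-ruin formula, the probability that an excursion starting at level $j\leq\lceil\varepsilon' N\rceil$ hits $0$ before returning to $\lceil\varepsilon' N\rceil$ is bounded by $C r^j$ for some constants $r\in(0,1)$ and $C>0$ depending only on $\mu$. Since the first excursion starts at $\lceil\varepsilon N\rceil$ and every subsequent one at $\lceil\varepsilon' N\rceil-1\geq\lceil\varepsilon N\rceil$, each excursion contributes at most $Cr^{\lceil\varepsilon N\rceil}\leq e^{-c_1N}$. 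As the number of excursions before $T\wedge\tau_0$ is trivially at most $T=C_0N^2$, a union bound gives $P^N_{k,\lceil\varepsilon N\rceil}(\tau_0\leq T,\,\tau_0<\tau^\star)\leq C_0N^2 e^{-c_1N}\leq e^{-c_2N}$, which combined with the first term yields the lemma.

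The main difficulty is the second step: since the jump probabilities of $Y$ depend on $X_t$, which itself drifts (slowly) downwards, $Y$ is not itself a biased random walk, and the biased-walk coupling only holds in the good band. The saving point is that on the very event we are trying to bound, the chain stays in the good band throughout every excursion below $\lceil\varepsilon' N\rceil$, so that the gambler's-ruin estimate applies uniformly to all of them.
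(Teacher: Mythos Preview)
Your proof is correct and uses the same two ingredients as the paper's: a gambler's-ruin (positive drift) estimate for $Y$ below a fixed level, and a Chernoff bound on the binomial number of sink hits. The paper packages the first ingredient as Lemma~\ref{propmin} via the supermartingale $e^{-cY_t}\mathbbm{1}(t<\min\{\tau^\star,\tau_{\geq j}\})$, which is exactly your biased-walk coupling in disguise, and then splits on the number $L$ of returns to level $\lceil\varepsilon N\rceil$ with threshold $e^{cN}$ rather than on a deterministic time $T=C_0N^2$ as you do. Both splits work for the same reason; your version is arguably a bit more direct since the union bound over at most $T=O(N^2)$ excursions is immediate, whereas the paper must choose the constant $c$ in $e^{cN}$ small relative to the constant $c''$ of Lemma~\ref{propmin}. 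One cosmetic point: the auxiliary level $\varepsilon'>\varepsilon$ you introduce is not needed (the paper works with $\varepsilon$ throughout), and your inequality $k-(\gamma-\delta)N\leq(1-\gamma+\delta)N$ tacitly assumes $k\leq N$; in full generality the state-space constraint only gives $k\leq N-1+\lceil\varepsilon N\rceil$, so you should take $C_0>2(1+\varepsilon-\gamma+\delta)$ instead, which changes nothing.
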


We postpone the proofs of these two lemmas to Section \ref{sec-proof-tecn-lemmas} and now show how Lemma \ref{propupperbound2} follows from them.
\begin{proof}[Proof of Lemma \ref{propupperbound2}]
 Let~$\gamma>\frac{1}{2},\, \delta \in (0,\gamma-\frac{1}{2})$ and~$\varepsilon \in (0,\gamma-\delta-\frac{1}{2})$.  Let~$k\geq \gamma N$. By Lemma~\ref{conn-micr-macr}, we have
    \begin{equation*}
        \mathbb{P}^N_k\left(\xi_1\leq  \left(\gamma-\delta\right)N\right)= P^N_{k+1,1}\left(\tau^\star\leq \tau_0\right).
    \end{equation*}   
By Lemma~\ref{prop1}, there exists~$c>0$ such that
\[
     c<P^N_{k+1,1}(\{\tau_{\geq \varepsilon N}<\tau_0\} \cup \{\tau^\star\leq \tau_0\})= P^N_{k+1,1}(\tau^\star\leq \tau_0)+ P^N_{k+1,1}(\tau_{\geq \varepsilon N}<\tau_0<\tau^\star).
\]
Using Lemma \ref{prop2} we have that there exists~$c'>0$ such that
\[
   P^N_{k+1,1}(\tau_{\geq \varepsilon N}<\tau_0<\tau^\star)\leq \inf_{k'\geq \left(\gamma-\delta \right)N} P^N_{k', \lceil \varepsilon N \rceil}( \tau^\star>\tau_0)\leq e^{-c' N}.
\]
Thus, we can conclude that, for any~$N$ large enough and for any~$k\geq \gamma N$,
\begin{equation*}\label{eq2}
    P^N_{k+1,1}(\tau^\star\leq \tau_0)>c-e^{-c'N}>c/2,
\end{equation*}
which concludes the proof.
\end{proof}

\subsection{Proofs of Lemmas \ref{prop1} and \ref{prop2}: positive drift for the number of active
particles}\label{sec-proof-tecn-lemmas}

Let us now prove Lemma \ref{prop1}, whose basic idea is as follows. If we start from a configuration with a sufficiently large number of particles and only one active particle, then the number of active particles tends to increase over time, showing a positive drift. A martingale argument completes the proof.

    \begin{proof}[Proof of Lemma \ref{prop1}]
    Let~$\gamma>\frac{1}{2},\, \delta \in (0,\gamma-\frac{1}{2})$ and~$\varepsilon \in (0,\gamma-\delta-\frac{1}{2})$. Let us show that if~$c$ is small enough and if~$X_0=k\geq\gamma N$ and~$Y_0=1$, the process 
\begin{equation*}
    M_t=e^{-cY_t} \mathbb{1}(t<\tau)  \  \ \text{with}\ \ \tau=\min\{\tau^\star, \, \tau_{\geq \varepsilon N}\}
\end{equation*}
 is a supermartingale with respect to the filtration~$(\mathcal{F}_t)_{t \in \mathbb{N}}$ generated by~$(X_t,Y_t)_{t\geq 0}$. We can write 
        \begin{equation*}
            E^N_{k,1}[M_{t+1}\mid \mathcal{F}_t]= M_t E^N_{k,1}[e^{-c\Delta Y_t} \mathbb{1}(t+1<\tau)\mid \mathcal{F}_t] \leq M_t  \mathbb{1}(t<\tau) E^N_{k,1}[e^{-c\Delta Y_t} \mid \mathcal{F}_t],
        \end{equation*}
        where, by \eqref{jump-to-sleeping}, we get
        \begin{align*}
          \mathbb{1}(t<\tau) E^N_{k,1}[e^{-c\Delta Y_t} \mid \mathcal{F}_t]&\leq \mathbb{1}(t<\tau) \left(  \frac{X_t-Y_t}{N} e^{-c}+\left(1-\frac{X_t-Y_t}{N} \right) e^{c}\right) 
            \\& =\mathbb{1}(t<\tau) \left( e^{c}-\left(\frac{X_t-Y_t}{N}\right)\left(e^c-e^{-c}\right)\right).
        \end{align*}
        If~$t<\tau$, then~$\frac{X_t-Y_t}{N}\geq \gamma-\delta -\varepsilon$. Thus, 
        \begin{equation*}
            \mathbb{1}(t<\tau) E^N_{k,1}[e^{-c\Delta Y_t}\mid \mathcal{F}_t]\leq e^c-\left(e^c-e^{-c}\right)\left(\gamma-\delta -\varepsilon \right)
        \end{equation*}
        
       and if we choose~$c=c(\gamma,\delta,\varepsilon)>0$ small enough such that
        \begin{equation*}
            \frac{e^c-1}{e^{ c} -e^{-c}}<\gamma-\delta -\varepsilon,
        \end{equation*}
       which is possible because~$\gamma-\delta-\varepsilon>\frac{1}{2}$ and the left-hand side tends to~$\frac{1}{2}$ when~$c\to 0$,  we have that~$M_t$ is a supermartingale.
        By Doob's Theorem, we have
        \begin{equation*}
     P^N_{k,1}\left(\tau_{\geq \varepsilon N}\geq \tau_0, \tau^\star\geq \tau_0\right)=P^N_{k,1}\left(\tau_0\leq\tau  \right)=E^N_{k,1}[M_{\tau_0 \wedge \tau}]\leq E^N_{k,1}[M_0]=e^{-c},
     \end{equation*}
concluding the proof.
    \end{proof}
To prove Lemma \ref{prop2}, we use the following result. As long as~$X_t$ remains large enough and~$Y_t$ sufficiently small, then~$Y_t$ has a positive drift (as explained above) and thus comes back above~$\varepsilon N$ an exponentially large number of times.  For any~$j\in \mathbb{N}$, let us call~$\tau_j^+=\inf\{t\geq 1: Y_t=j\}$ the first return time to~$j$.
\begin{lemma} \label{propmin}
   For any~$\gamma>\frac{1}{2},\, \delta \in (0,\gamma-\frac{1}{2})$ and for any~$\varepsilon \in (0,\gamma-\delta-\frac{1}{2})$, there exists~$c>0$ such that for any~$N$ large enough and for any~$k\geq  (\gamma-\delta) N$,
    \begin{equation*}
        P^N_{k,\lceil \varepsilon N\rceil}\big(\tau_0<\min\{\tau^\star, \tau_{\lceil\varepsilon N\rceil}^+\}\big)<e^{-c N}.
    \end{equation*}
\end{lemma}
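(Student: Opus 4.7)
The plan is to adapt the supermartingale computation from the proof of Lemma~\ref{prop1}. Fix $\gamma>\frac{1}{2}$, $\delta\in(0,\gamma-\frac{1}{2})$, $\varepsilon\in(0,\gamma-\delta-\frac{1}{2})$ and $k\geq(\gamma-\delta)N$; set $\tau=\min\{\tau^\star,\tau^+_{\lceil\varepsilon N\rceil}\}$ and let $A$ denote the event $\{\tau_0<\tau\}$.

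The first step is a purely combinatorial observation: since $\Delta Y_t\in\{-1,+1\}$, the event $A$ forces the first jump of $Y$ to be downward, i.e.\ $Y_1=\lceil\varepsilon N\rceil-1$. Indeed, from $Y_1=\lceil\varepsilon N\rceil+1$ any path to $0$ would have to cross the level $\lceil\varepsilon N\rceil$ again, triggering $\tau^+_{\lceil\varepsilon N\rceil}<\tau_0$ and contradicting $A$. Consequently, on $A$ we have $Y_t\leq\lceil\varepsilon N\rceil-1$ for every $1\leq t\leq\tau_0$, and combining this with $X_t>(\gamma-\delta)N$ on $\{t<\tau^\star\}$ yields the drift estimate $X_t-Y_t>(\gamma-\delta-\varepsilon)N$ for every $1\leq t<\tau$.

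The second step is to run the supermartingale argument of Lemma~\ref{prop1} starting from time $t=1$. Since $\gamma-\delta-\varepsilon>\frac{1}{2}$, we may pick $c=c(\gamma,\delta,\varepsilon)>0$ small enough that $\frac{e^c-1}{e^c-e^{-c}}<\gamma-\delta-\varepsilon$. Then, conditionally on $\{Y_1=\lceil\varepsilon N\rceil-1\}$, the process $M_t=e^{-cY_t}\mathbbm{1}(t<\tau)$, indexed by $t\geq 1$, is a supermartingale by the exact same computation as in Lemma~\ref{prop1}. Applying the optional stopping theorem at $\tau'=\min\{\tau,\tau_0\}$ (which is almost surely finite because the stabilisation terminates) and using that $Y_{\tau_0}=0$ on $A$, we obtain
\[
P^N_{k,\lceil\varepsilon N\rceil}(\tau_0<\tau)\ \leq\ e^{-c(\lceil\varepsilon N\rceil-1)}\ \leq\ e^{-c'N}
\]
for any $0<c'<c\varepsilon$ and $N$ large enough, which is the desired bound.

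The main technical subtlety lies in the first jump of $Y$: the drift condition requires $Y_t<\lceil\varepsilon N\rceil$, whereas $Y_0=\lceil\varepsilon N\rceil$ by assumption. Ruling out the upward first jump $Y_1=\lceil\varepsilon N\rceil+1$ (which is incompatible with the event $A$) is what allows the supermartingale of Lemma~\ref{prop1} to be reused verbatim from time~$1$ on.
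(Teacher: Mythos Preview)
Your proof is correct and follows essentially the same route as the paper: observe that on the event of interest the first step must take $Y$ down to $\lceil\varepsilon N\rceil-1$, and then reuse the supermartingale $M_t=e^{-cY_t}\mathbbm{1}(t<\tau)$ from Lemma~\ref{prop1} from that point on. The paper phrases the first-step reduction via the Markov property, bounding the probability by $\max_{k'\in\{k,k-1\}}P^N_{k',\,\lceil\varepsilon N\rceil-1}(\tau_0<\min\{\tau^\star,\tau_{\geq\lceil\varepsilon N\rceil}\})$, whereas you condition on $\{Y_1=\lceil\varepsilon N\rceil-1\}$ and run the supermartingale from $t=1$; these are equivalent. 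One small wording point: the drift bound $X_t-Y_t>(\gamma-\delta-\varepsilon)N$ that you need for the supermartingale inequality must hold on $\{Y_1=\lceil\varepsilon N\rceil-1\}\cap\{t<\tau\}$, not merely on $A$; your combinatorial argument does give this (since $Y_1=\lceil\varepsilon N\rceil-1$ and $t<\tau^+_{\lceil\varepsilon N\rceil}$ force $Y_t\leq\lceil\varepsilon N\rceil-1$), but it would be cleaner to state it that way.
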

\begin{proof}[Proof of Lemma \ref{propmin}]
   Let~$\gamma>\frac{1}{2},\, \delta \in (0,\gamma-\frac{1}{2})$ and~$\varepsilon \in (0,\gamma-\delta-\frac{1}{2})$. Let us define~$j:=\lceil\varepsilon N\rceil$. First, since~$Y_t$ can decrease at each step by at most~$1$, and given the different possibilities at the first step, we have
   \begin{equation*}
       P_{k,j}^N\big(\tau_0<\min\{\tau^\star,\, \tau_j^+\}\big)\leq \max_{k'\in \{k, k-1\}}P_{k',\,j-1}^N\big(\tau_0<\min\{\tau^\star,\, \tau_{\geq j}\}\big).
   \end{equation*}
   Let~$k'\in \{k,\,k-1\}$. 
   Using the same arguments as in the proof of the previous lemma, we can show that when~$X_0=k', Y_0= j-1$ and~$c$ is small enough, the process 
    \begin{equation*}
 M_t=e^{-c Y_t}\mathbb{1}(t<\tau) \ \text{with} \ \tau=\min\{ \tau^\star, \tau_{\geq j}\}
\end{equation*}
    is a supermartingale with respect to the filtration~$(\mathcal{F}_t)_{t\geq 0}$ generated by~$(X_t, Y_t)_{t\geq 0}$. By Doob's Theorem, we have 
    \begin{equation*}
 P^N_{k', j-1}\left( \tau_0<\tau\right)=E_{k',j-1}^N[M_{\tau_0\wedge \tau}]\leq E^N_{k', j-1}[M_0]= e^{-c (\lceil\varepsilon N \rceil-1)},
\end{equation*}
and this concludes the proof.
\end{proof}
Lemma \ref{propmin} enables us to prove that if~$X_0\geq(\gamma-\delta)N$ and~$Y_0\geq \varepsilon N$, with high probability after stabilisation less than~$(\gamma-\delta)N$ particles remain.

\begin{proof}[Proof of Lemma \ref{prop2}]
Let~$\gamma>\frac{1}{2},\, \delta \in (0,\gamma-\frac{1}{2})$ and~$\varepsilon \in (0,\gamma-\delta-\frac{1}{2})$. Let~$k\geq (\gamma-\delta)N$ and let us call
\begin{equation}
    L=\sum_{t=1}^{\tau_0}\mathbb{1}(Y_t=\lceil \varepsilon N\rceil).
\end{equation}
Let~$(Z_t)_{t\geq 0}$ be an i.i.d.\ sequence of uniform variables in~$\{1,\,\ldots,\,N\}$ and assume that the microscopic chain is constructed by moving the active particle to the site~$Z_t$ at each step. For every~$t\geq 1$ let
\begin{equation*}
    S_t\ =\ \sum_{i=1}^t\mathbb{1}(Z_i=s_N)\,,
\end{equation*}
which is the number of times that the sink is chosen before time~$t$. Then, for any positive constant~$c>0$, we have the inclusion
\[\big\{L>e^{cN},\,\tau^\star>\tau_0\big\}\ \subset\ \big\{S_{\lfloor e^{cN}\rfloor}<N \big\}\,.\]
Note that~\smash{$S_{\lfloor e^{cN}\rfloor}$} is a binomial random variable with parameters~$(\lfloor e^{cN}\rfloor, \frac{1}{N})$. By  Hoeffding's inequality~\cite{W}, if~$X$ is a binomial~$(n,p)$ and if~$\ell \leq np$, then
   \begin{equation*}
       P(X\leq \ell)\leq \exp\left(-2n \left(p-\frac{\ell}{n}\right)^2\right).
   \end{equation*}
   Thus, for any~$N$ large enough,
   \begin{equation*}
  P^N_{k,\lceil \varepsilon N\rceil}\left(\tau^\star >\tau_0, L>e^{cN}\right)    \leq  P^N_{k,\lceil \varepsilon N\rceil}\left(S_{\lfloor e^{cN}\rfloor}<N\right)\leq \exp\left(-\frac{e^{cN}}{N}\right)\leq e^{-c'N}
   \end{equation*}
for a certain~$c'>0$.
Let~$c''>0$ be the constant given by Lemma \ref{propmin}. Taking~$c<c''/2$, we have
    \begin{align*}
         P^N_{k, \lceil \varepsilon N \rceil}\left(\tau^\star>\tau_0, L\leq e^{cN}\right)&=\sum_{n=0}^{\lfloor e^{cN} \rfloor}P^N_{k, \lceil \varepsilon N\rceil}\left(\tau^\star>\tau_0, L=n\right)\\&\leq \sum_{n=0}^{\lfloor e^{cN}\rfloor} 
         \max_{k'>\left( \gamma-\delta \right)N}P^N_{k',\lceil \varepsilon N \rceil}\big(\tau_0<\min\{\tau^\star, \tau_{\lceil \varepsilon N\rceil}^+\}\big)
         \\&\leq (e^{cN}+1)e^{-c'' N}=e^{-(c''/2) N},
    \end{align*}
for~$N$ large enough, where we used Lemma \ref{propmin} in the last inequality.
\end{proof}

\section{Proof of the lower bound in Theorem \ref{mainthm2}}\label{sec-proof-lowerbound}
The lower bound in Theorem \ref{mainthm2} is a consequence of Lemma \ref{thmreturntime} with~$\gamma<\frac{1}{2}$. To use this lemma, in Sections \ref{sec-H1-holds} and \ref{sec-H2-holds} respectively, we check that Conditions \ref{hpH1} and \ref{hpH2} hold for the macroscopic chain~$(\xi_t)_{t\geq 0}$. 

By Lemma~\ref{conn-micr-macr}, one step in the macroscopic chain~$(\xi_t)_{t\geq 0}$ corresponds to the stabilisation of the microscopic chain~$(X_t,Y_t)$. We will rely on the following result about the stabilisation time~$\tau_0$ of~$(X_t,Y_t)_{t\geq 0}$.

\begin{lemma}\label{prop4}
   For any~$\gamma<\frac{1}{2}$, there exists~$c >0$ such that for any~$N$ large enough, for any~$k\leq \gamma N$, for any~$j\in \{1,\,\dots,\,k\}$ and for any~$n>\frac{4j}{1-2\gamma}$,
    \begin{equation*}
        P^N_{k,j}(\tau_0>n)\leq e^{-c n}. 
    \end{equation*}
\end{lemma}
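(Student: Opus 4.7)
The plan is to exploit that, when $X_t \leq \gamma N$ with $\gamma<1/2$, the number of active particles $Y_t$ has a uniformly negative drift, so that $\tau_0$ has exponential tails via a standard Chernoff-type exponential martingale. The key observation is that $X_t$ is non-increasing in $t$: the transition~\eqref{jump-to-sink} decreases $X_t$ by one, while~\eqref{jump-to-sleeping} and~\eqref{jump-to-active} leave it unchanged. Hence $X_t \leq X_0 = k \leq \gamma N$ throughout the dynamics, which forces the up-probability $p_+ := (X_t-Y_t)/N$ of $Y_t$ to satisfy $p_+\leq \gamma$, while $p_- := 1 - p_+ \geq 1-\gamma > 1/2$, giving a uniform negative drift on $Y_t$ as long as $t<\tau_0$.

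Next I would build an exponential supermartingale. Set $\phi(\theta) := \gamma e^{\theta} + (1-\gamma) e^{-\theta}$. Since $\phi(0) = 1$ and $\phi'(0)=2\gamma - 1 < 0$, a Taylor expansion gives $\phi(\theta) = 1 - (1-2\gamma)\theta + O(\theta^2)$, so that $\lambda(\theta) := -\log \phi(\theta)$ satisfies $\lambda(\theta)/\theta \to 1-2\gamma$ as $\theta \downarrow 0$. Pick $\theta_0 > 0$ small enough that both $\phi(\theta_0)<1$ and $\lambda := \lambda(\theta_0) > (1-2\gamma)\theta_0/2$. Since the map $p \mapsto p e^{\theta_0} + (1-p) e^{-\theta_0}$ is nondecreasing in $p\in[0,1]$ and $p_+\leq \gamma$, we obtain, on $\{t<\tau_0\}$,
\[
E^N_{k,j}\!\left[e^{\theta_0 \D Y_t} \,\big|\, \cF_t\right] \,\leq\, \phi(\theta_0) \,=\, e^{-\lambda},
\]
so $(M_{t\wedge \tau_0})_{t\geq 0}$, with $M_t := \exp(\theta_0 Y_t + \lambda t)$, is a supermartingale with respect to the filtration $(\cF_t)_{t\geq 0}$ generated by $(X_t,Y_t)_{t\geq 0}$.

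The conclusion follows from optional stopping at the bounded time $n$: $E^N_{k,j}[M_{n\wedge \tau_0}] \leq M_0 = e^{\theta_0 j}$. On $\{\tau_0 > n\}$ we have $Y_n \geq 1$ and $M_{n\wedge \tau_0} = M_n \geq e^{\theta_0 + \lambda n}$, so $P^N_{k,j}(\tau_0 > n) \leq e^{\theta_0 j - \lambda n}$. Setting $c := \lambda/2 > 0$, the inequality $\lambda > (1-2\gamma)\theta_0/2$ rearranges to $\theta_0/c < 4/(1-2\gamma)$, so any $n > 4j/(1-2\gamma)$ satisfies $cn > \theta_0 j$, giving $\theta_0 j - \lambda n < cn - 2cn = -cn$ and hence the desired $P^N_{k,j}(\tau_0 > n) \leq e^{-cn}$. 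No serious obstacle is anticipated: the argument is a routine exponential Chernoff bound, and the only slightly delicate point is matching the specific constant $4/(1-2\gamma)$, for which the slack coming from $\lambda(\theta_0)/\theta_0 \to 1-2\gamma$ as $\theta_0\to 0$ is exactly what is used.
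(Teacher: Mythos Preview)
Your proof is correct and follows essentially the same approach as the paper: both use that $X_t$ is non-increasing so that the up-probability of $Y_t$ stays below $\gamma<1/2$, and then build an exponential supermartingale to get exponential tails for $\tau_0$, choosing the exponential parameter small to match the constant $4/(1-2\gamma)$. The only cosmetic difference is that the paper packages the drift into the compensated process $\tilde Y_t=Y_t+\varepsilon t$ (with $\varepsilon=\tfrac12-\gamma$) and bounds a level-crossing probability, whereas you put the compensation directly in the exponent via $M_t=e^{\theta_0 Y_t+\lambda t}$ and apply optional stopping at the deterministic time $n$.
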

We will use this lemma twice. First, in Section \ref{sec-H1-holds}, we show that when the
particle density is below~$1/2$, each step of the macroscopic chain does
not last very long (using Lemma~\ref{prop4} with~$j=1$) and thus we expect that
only few particles can escape.
Second, in Section \ref{sec-H2-holds}, using that the lemma holds for all~$j$, we
show that, if during
the evolution of the microscopic chain the density goes below~$\gamma$, then
whatever the fraction of active particles is, it quickly decreases to~$0$.
\begin{proof}[Proof of Lemma \ref{prop4}]
Let $\gamma \in (0,\frac{1}{2}),\,\varepsilon:=\frac{1}{2}-\gamma,\,k\leq \gamma N$ and $j\in \{1,\dots,\,k\}$.
   Let us call~\smash{$\Tilde{Y}_t:=Y_t+\varepsilon t$}.  Then, for any~$n\geq 1$, 
\begin{align*}
    P^N_{k,j}(\tau_0>n)&=P^N_{k,j}(\forall\, 0\leq s\leq n, \,Y_{s}>0)\\&= P^N_{k,j}(\forall\, 0\leq s\leq n, \,\Tilde{Y}_s>\varepsilon s, Y_s>0)\\&
    \leq P_{k,j}^N(\Tilde{\tau}_{\geq n \varepsilon}<  \tau_0),
\end{align*}
where~$\Tilde{\tau}_{\geq \ell}=\inf\{t\geq 1: \Tilde{Y}_t\geq \ell\}$, for any~$\ell\geq 0$. To conclude, it is enough to prove that there exists~$c_2>0$ such that~$P_{k,j}^N(\Tilde{\tau}_{\geq n\varepsilon }<\tau_0)<e^{-c_2 n}$.
Let us call~$M_t=e^{\bar c \Tilde{Y}_t}\mathbb{1}(t<\tau_0)$ where~$\bar c>0$ is a constant which will be chosen later and let~$(\mathcal{F}_t)_{t\geq 0}$ be the filtration generated by~$(X_t,Y_t)_{t\geq 0}$. Then, 
\begin{equation*}
    E^N_{k,j}\big[M_{t+1}\mid \mathcal{F}_t\big]\leq M_t E^N_{k,j}\big[e^{\bar{c}\Delta \Tilde{Y}_t}\mathbb{1}(t+1< \tau_0)\mid \mathcal{F}_t\big] \leq M_t \mathbb{1}(t<\tau_0) e^{\bar c \varepsilon}
E_{k,j}^N\big[e^{\bar c\Delta Y_t}\,\big|\,\mathcal{F}_t\big],
\end{equation*}
where, by \eqref{jump-to-sleeping}, we have 
\begin{align*}
    \mathbb{1}(t<\tau_0) e^{\bar c \varepsilon}
E_{k,j}^N\big[ e^{\bar c\Delta Y_t}\,\big|\,\mathcal{F}_t \big]
\ \leq\
e^{\bar c \varepsilon}
\Big( e^{-\bar c} + \frac{X_t-Y_t}{N} (e^{\bar c}-e^{-\bar c}) \Big)
\ \leq\
e^{\bar c \varepsilon}
\Big( e^{-\bar c} + \gamma (e^{\bar c}-e^{-\bar c}) \Big).
\end{align*}
If we choose~$\bar c>0$ small enough such that 
\begin{equation*}
 e^{\bar c \varepsilon}
\Big( e^{-\bar c} + \gamma (e^{\bar c}-e^{-\bar c}) \Big)
\ \leq\ 1, 
\end{equation*}
which is possible because the left-hand side above
is~$1-\bar c\varepsilon + o(\bar c)$ when~$\bar c$ tends to~$0$, we have that~$M_t$ is a supermartingale with respect to~$(\mathcal{F}_t)_{t\geq 0}$. Let us call~$\tau=\min\{ \tau_0,\, \Tilde{\tau}_{\geq n \varepsilon} \}$. By Doob's Theorem, for any~$ j\in \{1,\,\dots,\, k\}$ we have
\begin{equation*}
    e^{j\bar{c}}=E^N_{k,j}[M_0]\geq E^N_{k,j}[M_\tau]\geq P^N_{k,j}(\Tilde{\tau}_{\geq n \varepsilon}< \tau_0) e^{\bar{c}\varepsilon n}.
\end{equation*}
Choosing~$n \geq 2j/\varepsilon$ gives the desired result, with~$c=\bar
c\varepsilon/2$.

\end{proof}

\subsection{Positive drift for densities below \texorpdfstring{$1/2$}{1/2}} \label{sec-H1-holds}
 We now verify that Condition \ref{hpH1} holds for the Markov chain~$(\xi_t)_{t\geq 0}$. 
The idea is that, when the initial number of particles is sufficiently small, we can obtain an upper bound of the number of exiting particles in a single step. In particular, if~$\xi_0<\frac{N}{2}$ then~$(\xi_t)_{t\geq 0}$ has a positive drift. In the following result, we adopt the convention that a geometric random variable takes values
in~$\mathbb{N}\setminus\{0\}$.
\begin{lemma}\label{prop6}
For any~$\gamma<\frac{1}{2}$, for every~$k\leq \gamma N$, under~$\mathbb{P}_k^N$ we have
that~$2-\Delta \xi_0$ is stochastically dominated by a geometric random variable with parameter~$\frac{2}{3}$.
\end{lemma}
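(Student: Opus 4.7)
My plan is, via Lemma~\ref{conn-micr-macr}, to reduce the statement to an estimate on the microscopic chain $(X_t,Y_t)_{t\geq 0}$ started from $(X_0,Y_0)=(k+1,1)$. Let $L:=X_0-X_{\tau_0}$ be the number of sink jumps performed during stabilisation (these are the only transitions decreasing $X$, by~\eqref{jump-to-sink}). Then $\Delta\xi_0\stackrel{d}{=}X_{\tau_0}-k$, so $2-\Delta\xi_0\stackrel{d}{=}1+L\geq 1$. Since a Geom$(2/3)$ variable on $\N\setminus\{0\}$ has tail $(1/3)^{j-1}$, the claimed dominance is equivalent to
\begin{equation*}
P^N_{k+1,1}(L\geq m)\ \leq\ (1/3)^m\qquad\text{for every integer } m\geq 0.
\end{equation*}

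To produce such a bound I will build an exponential supermartingale of the form $M_t:=a^{Y_t}\,3^{L_t}\,\mathbbm{1}(t\leq\tau_0)$ for some parameter $a>1$, in the spirit of Lemmas~\ref{prop1} and~\ref{prop4}. A direct computation using~\eqref{jump-to-sink}--\eqref{jump-to-active} gives, on $\{t<\tau_0\}$,
\begin{equation*}
E^N_{k+1,1}\!\left[\frac{M_{t+1}}{M_t}\,\bigg|\,\mathcal F_t\right]\ =\ \frac{3}{Na}\ +\ \frac{X_t-Y_t}{N}\,a\ +\ \frac{N-1-X_t+Y_t}{Na}.
\end{equation*}
Setting $r:=(X_t-Y_t)/N$ and using $X_t\leq X_0=k+1\leq\gamma N+1$, we have $r\leq\gamma+1/N<1/2$, so the supermartingale condition reduces to the quadratic inequality $r\,a^2-a+(1-r+2/N)\leq 0$. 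Its discriminant $(1-2r)^2-8r/N$ is strictly positive for $N$ large since $\gamma<1/2$, and the smaller root $a=1+O\!\bigl(1/[N(1-2\gamma)]\bigr)$ thus provides an admissible choice that tends to~$1$ as $N\to\infty$. Exponential tails of $\tau_0$ provided by Lemma~\ref{prop4} (with $j=1$) justify Doob's optional stopping theorem at $\tau_0$ and yield $E^N_{k+1,1}[3^L]=E[M_{\tau_0}]\leq E[M_0]=a^{Y_0}=a$.

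Markov's inequality then gives $P^N_{k+1,1}(L\geq m)\leq a/3^m$, and the stochastic dominance follows for $N$ large. The main obstacle is the \emph{sharp} constant $2/3$: since $L\geq 1$ occurs with positive probability one has $E[3^L]>1$ strictly, so the Markov step only produces $(1+O(1/N))/3^m$ rather than exactly $(1/3)^m$. I plan to close this gap by handling $m=1$ separately through the elementary estimate $P(L\geq 1)\leq E[L]=E^N_{k+1,1}[\tau_0]/N=O(1/N)$ (with $E[\tau_0]$ controlled by the negative drift of $Y_t$ already exploited in the proof of Lemma~\ref{prop4}), which easily beats~$1/3$, and invoking the exponential supermartingale only for $m\geq 2$, where the extra factor $a\to 1$ is absorbed by the geometric slack for $N$ large enough; this matches the ``for $N$ large enough'' quantifier built into Condition~\ref{hpH1} in Lemma~\ref{thmreturntime}.
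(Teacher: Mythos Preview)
Your route differs from the paper's. The paper does not build a joint supermartingale in $(Y_t,L_t)$; instead it writes
\[
P^N_{k+1,1}(L\geq\ell)\ \leq\ P^N_{k+1,1}(\tau_0>a\ell)\ +\ P^N_{k+1,1}(S_{a\ell}\geq\ell),
\]
where $S_n$ counts sink hits among $n$ i.i.d.\ uniform draws in $\{1,\dots,N\}$, bounds the first term via Lemma~\ref{prop4} and the second by a direct Chernoff bound on a Binomial$(a\ell,1/N)$, and then chooses $a$ large and $N$ large so that each piece is at most $\tfrac12\,3^{-\ell}$. Your supermartingale $M_t=a^{Y_t}b^{L_t}$ is a clean alternative that packages both effects into a single optional-stopping step; note also that $M_t$ is actually bounded (since $Y_t\leq X_0$ and $L_t=X_0-X_t\leq X_0$), so you do not even need the tail of $\tau_0$ to justify stopping.

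There is, however, a real gap in your last paragraph. With base exactly~$3$, the one-step inequality forces $a>1$ strictly (as you note, $E[3^L]>1$ since $P(L\geq1)>0$), and Markov then gives $P(L\geq m)\leq a/3^m$. The ratio of this bound to the target $(1/3)^m$ is the constant $a>1$, independent of $m$: there is no ``geometric slack'' to absorb it, for any $m\geq 2$. Your separate treatment of $m=1$ via $P(L\geq1)\leq E[L]=E[\tau_0]/N=O(1/N)$ is fine, but it does nothing for large $m$.

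The fix is simply to take any fixed $b>3$ in place of $3$. The one-step condition becomes $r\,a^2-a+(1-r)+(b-1)/N\leq 0$, with discriminant $(1-2r)^2-4r(b-1)/N$, still positive uniformly over $r\leq\gamma<\tfrac12$ for $N$ large, and the smaller root $a=a(b,N)\to 1$. Optional stopping yields $E[b^L]\leq a$, hence $P(L\geq m)\leq a/b^m$, and this is $\leq(1/3)^m$ for all $m\geq 1$ as soon as $a\leq b/3$, i.e.\ for $N$ large enough. With this amendment your argument goes through and is arguably tidier than the paper's time-splitting approach.
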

Letting~$G$ denote the geometric random variable, this lemma shows that Condition \ref{hpH1}
holds
with~$D=2-G$, with~$\mathbb{E}[D]=2-3/2>0$.
\begin{proof}[Proof of Lemma \ref{prop6}]
We want to show that~$\Delta \xi_0$ stochastically dominates~$2-\text{Geom}(\frac{2}{3})$.
Let~$(Z_t)_{t\geq 0}$ be an i.i.d.\ sequence of uniform variables in~$\{1,\,\ldots,\,N\}$ and assume that the microscopic chain evolves by moving the active particle to the site~$Z_t$ at each step. For every~$t\geq 1$ let \[S_t\ =\ \sum_{i=1}^t\mathbb{1}(Z_i=s_N)\,,\] which is the number of times that the sink is chosen before time~$t$. 
  
Let~$c$ be the constant given by Lemma~\ref{prop4} and consider~$k\leq \gamma N$.  By Lemma~\ref{conn-micr-macr}, $1-\Delta \xi_0$, the number of exiting particles in one step of the macroscopic chain~$(\xi_t)_{t\geq 0}$ is equal to~$S_{\tau_0}$, the number of exiting particles during stabilisation of the microscopic chain~$(X_t, Y_t)_{t\geq 0}$. We deduce that, for any~$ \ell\geq 1$ and for any~$a\geq \frac{4}{1-2\gamma}$, 
\begin{align*}\mathbb{P}_k^N\left(\Delta \xi_0\leq 1-\ell\right)= P^N_{k+1,1}(S_{\tau_0}\geq \ell)&\leq P^N_{k+1,1}(\tau_0>a \ell)+P^N_{k+1,1}(\tau_0\leq a \ell, S_{\tau_0}\geq \ell)\\&\leq e^{-c a \ell}+P^N_{k+1,1}( S_{a \ell}\geq  \ell),
\end{align*}
where we used Lemma \ref{prop4} in the last inequality.

Using a Chernoff bound, for any~$\lambda>0$, we have that 
\begin{align*}
P^N_{k+1,1}(S_{a \ell}\geq  \ell)\leq e^{-\lambda \ell}E^N_{k+1,1}[e^{\lambda S_{a\ell}}]&=\exp\left( a \ell\ln \left(\frac{e^{\lambda}-1}{N}+1\right)-\lambda \ell\right)\\& \leq \exp\left( a \ell \frac{e^\lambda}{N}-\lambda \ell\right).
\end{align*}
Taking~$N>a$ and choosing~$\lambda=\ln(N/a)$, we get
\begin{equation*}
    P^N_{k+1,1}(S_{a \ell}\geq  \ell) \leq \exp\left(  \ell -\ell \ln \left( \frac{N}{a}\right)\right)\leq \frac{1}{2}\frac{1}{3^\ell}, 
\end{equation*}
for~$N$ large enough.
Thus, for any~$\ell \geq 1$, taking~$a$ large enough and~$N$ large enough, we get
\begin{align*}
   \mathbb{P}^N_{k}(2-\Delta \xi_0\geq 1+ \ell)= \mathbb{P}^N_{k}(\Delta \xi_0\leq 1-\ell)\leq e^{-ca\ell}+\frac 1
2 \frac
1 {3^\ell} \leq \frac 1 {3^\ell}.
\end{align*}
  We conclude that~$2-\Delta \xi_0$ is stochastically dominated by a geometric random variable with parameter~$\frac{2}{3}$,  concluding the proof.
\end{proof}
\subsection{Few particles exit in one step} \label{sec-H2-holds}
We now show that Condition \ref{hpH2} is satisfied by proving that, for any~$0<\beta<\gamma<\frac{1}{2}$, when~$\xi_0\geq \gamma N$, the probability that after one step we have less than~$\beta N$ particles is exponentially small.   
\begin{lemma}\label{lemmaxi1}
    For any~$0<\gamma<\frac{1}{2}$ and for any~$0<\beta<\gamma$, there exists~$c>0$ such that, for any~$N$ large enough and for any~$k\geq \gamma N$,
    \begin{equation*}
\mathbb{P}^N_k\left(\xi_1< \beta N\right)\leq e^{-cN}.
    \end{equation*}
\end{lemma}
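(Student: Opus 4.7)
The plan is to use Lemma \ref{conn-micr-macr} to rewrite
\[
\bbP^N_k(\xi_1<\beta N)\ =\ P^N_{k+1,1}(X_{\tau_0}<\beta N),
\]
and then analyse the microscopic chain $(X_t,Y_t)$ via a stopping-time argument that brings us into the regime where Lemma \ref{prop4} applies. The main difficulty is that $k$ is allowed to be close to $N-1$, so the microscopic chain may start with density well above $1/2$ and be initially supercritical, which means Lemma \ref{prop4} cannot be invoked directly at time $0$.

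I would fix an intermediate density $\gamma^\star=(\gamma+\beta)/2$, so that $\beta<\gamma^\star<\gamma<\tfrac12$, and set $\sigma=\inf\{t\geq 0:X_t\leq\gamma^\star N\}$, with the convention $\inf\emptyset=+\infty$. Since $X_t$ is non-increasing with jumps in $\{0,-1\}$, on $\{\sigma\geq\tau_0\}$ one has $X_{\tau_0}\geq\lfloor\gamma^\star N\rfloor>\beta N$ for $N$ large, so
\[
\{X_{\tau_0}<\beta N\}\ \subseteq\ \{\sigma<\tau_0\}.
\]
On the event $\{\sigma<\tau_0\}$ we have $X_\sigma=\lfloor\gamma^\star N\rfloor$, $Y_\sigma\geq 1$ and $Y_\sigma\leq X_\sigma\leq\gamma^\star N$. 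Applying the strong Markov property at the stopping time $\sigma$ and then Lemma \ref{prop4} to the chain restarted at $(X_\sigma,Y_\sigma)$ with density parameter $\gamma^\star<\tfrac12$ gives, for a constant $C>4\gamma^\star/(1-2\gamma^\star)$ depending only on $\gamma^\star$ and some $c>0$,
\[
P^N_{k+1,1}\big(\tau_0-\sigma>CN\,\big|\,\cF_\sigma\big)\ \leq\ e^{-cN}\qquad\text{on }\{\sigma<\tau_0\}.
\]

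To bound the number of exits after $\sigma$, I would reuse the coupling to an i.i.d.\ uniform driving sequence $(Z_t)_{t\geq 1}$ on $V_N\cup\{s_N\}$ introduced in the proof of Lemma \ref{prop2}, with $S_t=\sum_{i\leq t}\mathbbm{1}(Z_i=s_N)$. Since $X_t=X_0-S_t$, on $\{\tau_0-\sigma\leq CN\}$ the extra exits satisfy $S_{\tau_0}-S_\sigma\leq S_{\sigma+CN}-S_\sigma$, and the latter is, conditionally on $\cF_\sigma$, a $\mathrm{Binomial}(CN,1/N)$ variable with mean $C$. A standard Chernoff bound for the binomial with bounded mean but linearly diverging deviation then yields, for $\varepsilon=(\gamma^\star-\beta)/2$,
\[
P\big(S_{\sigma+CN}-S_\sigma\geq\varepsilon N\,\big|\,\cF_\sigma\big)\ \leq\ e^{-c'N}
\]
for $N$ large. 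Combining the three good events, on an event of probability at least $1-e^{-c''N}$ we have $X_{\tau_0}=X_\sigma-(S_{\tau_0}-S_\sigma)\geq\lfloor\gamma^\star N\rfloor-\varepsilon N>\beta N$, which gives the claim. The main obstacle is precisely the supercritical start when $k\gtrsim N/2$; the stopping time $\sigma$ is the device that postpones the use of Lemma \ref{prop4} until the chain has entered the subcritical regime $X_t\leq\gamma^\star N<N/2$ in which the lemma can be applied.
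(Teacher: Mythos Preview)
Your proposal is correct and follows essentially the same route as the paper: translate via Lemma~\ref{conn-micr-macr}, apply the strong Markov property at the first time $X_t$ drops below a fixed subcritical level, invoke Lemma~\ref{prop4} to bound the remaining stabilisation time, and then use a Chernoff bound on the binomial number of sink visits. The only (inessential) difference is the choice of threshold: the paper stops at $X_t=\lceil\gamma N\rceil-1$ and applies Lemma~\ref{prop4} with parameter~$\gamma$ itself, whereas you introduce an intermediate level $\gamma^\star=(\gamma+\beta)/2$, which is unnecessary since $\gamma<\tfrac12$ is already subcritical.
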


\begin{proof}[Proof of Lemma \ref{lemmaxi1}]
Let $k_1:=\lceil\beta N\rceil$ and~$ k_2:=\lceil\gamma N\rceil$.
Let us consider the microscopic chain~$(X_t, Y_t)_{t\geq 0}$ with~$X_0\geq k_2$. 
By Lemma \ref{conn-micr-macr} we have that, for any~$k\geq k_2$, 
    \begin{equation*}
    \mathbb{P}_k^N\left(\xi_1< k_1\right)=
    P^N_{k+1,1}\left(X_{\tau_0}< k_1\right).
    \end{equation*}
    Using that~$X_t$ can decrease by at most~$1$ at each step of the microscopic chain, the strong Markov property applied at the first time when~$X_t=k_2-1$ tells us that 
    \begin{align*}
      P^N_{k+1,1}\left(X_{\tau_0}< k_1\right)  &\leq \max_{j< k_2}P_{k_2-1,j}^N\left(X_{\tau_0}< k_1\right)\\&=\max_{j < k_2}P_{k_2-1,j}^N\left(X_{\tau_0}-X_0-1 < k_1-k_2\right)\\&\leq \max_{j<k_2} P_{k_2-1,\,j}^N\left(1+X_0-X_{\tau_0}>(\gamma-\beta) N\right).
    \end{align*}
Let~$(Z_t)_{t\geq 0}$ be an i.i.d.\ sequence of uniform variables in~$\{1,\,\ldots,\,N\}$ and assume that the microscopic chain evolves by moving the active particle to the site~$Z_t$ at each step. For every~$t\geq 1$, we denote by~$S_t$ as the number of exiting particles up to time~$t$, that is, \[S_t\ =\ \sum_{i=1}^t\mathbb{1}(Z_i=s_N)\,.\] 
Let~$c>0$ be the constant given by Lemma \ref{prop4}, let~$j< k_2$ and~$a>\frac{4\gamma}{1-2\gamma}$, so that~$a N\geq \frac{4 k_2}{1-2\gamma}$ for $N$ large enough. Recalling that~$X_0-X_{\tau_0}$ is equal to~$S_{\tau_0}$, the number of particles exiting during stabilisation in the microscopic chain~$(X_t, Y_t)_{t\geq 0}$, we deduce that,
\begin{align*}P_{k_2-1,\,j}^N&\left(1+X_0-X_{\tau_0}>(\gamma-\beta) N\right)\\&= P^N_{k_2-1,\,j}(1+S_{\tau_0}> (\gamma-\beta)N)\\&\leq P^N_{k_2-1,\,j}(\tau_0> a N)+P^N_{k_2-1,\,j}(\tau_0\leq  a N, 1+S_{\tau_0}>  (\gamma-\beta)N)\\&\leq e^{-c a N}+P^N_{k_2-1,\,j}(1+S_{a N}> (\gamma-\beta)N),
\end{align*}
where we used Lemma \ref{prop4} in the last inequality.

Using a Chernoff bound similar to that used in the proof of Lemma \ref{prop6}, we
get~$c'>0$ such that, for~$N$ large enough
\begin{align*}
P^N_{k_2-1,j}(1+S_{a N}>  (\gamma-\beta)N)
\leq e^{-c'N}.
\end{align*}
Thus, we can find a constant~$c''>0$ such that
\begin{align*}
    P^N_{k_2-1,j}\left(1+X_0-X_{\tau_0}>(\gamma-\beta) N\right)\leq e^{-c'' N},
\end{align*}
 concluding the proof.
\end{proof}

\appendix
\section{Appendix: proof of Lemma \ref{thmreturntime}}\label{sec-appendix}
Let~$N\geq 1$, consider an irreducible, time-homogeneous Markov chain~$(Z_t)_{t\geq 0}$ with state space~$\{0,\,\dots,\,N-1\}$ and  let us denote by~$\nu^N$ its stationary distribution. Let~$\mathbb{P}_k^N, \mathbb{E}_k^N$ be the distribution and the expectation of~$(Z_t)_{t\geq 0}$ when~$Z_0=k$ respectively and let~\smash{$T_k^+=\inf\{t\geq 1: Z_t=k\}$} denote the first return time to~$k$.  Let~$0<\alpha<\beta<\gamma \leq 1$  and let us define~$k_0:=\lfloor\alpha N\rfloor,\, k_1:=\lceil\beta N \rceil$ and $k_2:=\lceil\gamma N\rceil$. For any~\smash{$j\in \{0,\,\dots,\,N-1\}$}, let~$T_{\geq j}$ and $T_{\leq j}$ be the first hitting time of the intervals~$[j,N]$ and~$[0,j]$ respectively. When~\smash{$Z_0\geq j$}, let us call~$T_{\geq j}^+=\inf\{t\geq 1: Z_t \geq j\}$ the first return time to the interval~$[j,N]$. Suppose that Conditions \ref{hpH1} and \ref{hpH2} hold. 

Since~$\nu^N$ is the stationary distribution of an irreducible chain, we have that, for any~$k \in \{0,\,\dots,\,N-1\}$, \smash{$\nu^N(k)=\big(\mathbb{E}_k^N[T_k^+]\big)^{-1}$} (see~\cite{LPW} for example). Thus, it is enough to prove that there exists~$c>0$ such that if~$k\leq k_0$, 
\begin{equation} \label{eqexptime}
    \mathbb{E}_k^N[T_k^+]\geq e^{cN}.
\end{equation}
Indeed, if the above bound is satisfied, then we can find~$c'>0$ such that, for any~$N$ large enough,
\begin{equation*}
    \nu^N\left(\{0,\,\dots,\, k_0\}\right)\leq \sum_{k=0}^{k_0}e^{-cN}\leq e^{-c'N}.
\end{equation*}
To prove \eqref{eqexptime}, we first show that there exists~$C>0$ such that, for any~$N$ large enough and for any~$k\leq k_0$, 
\begin{equation}\label{eqtime1}
    \mathbb{P}_k^N\left(T_{\geq k_2}<T_k^+\right)>C.
\end{equation}
Then, we prove that there exist~$c_0, C'>0$ such that for any~$N$ large enough and for any~$k'\geq k_2$, 
\begin{equation}\label{eqtime2}
    \mathbb{P}_{k'}^N\left(T_{\leq k_0}>e^{c_0 N}\right)>C'.
\end{equation}
Combining \eqref{eqtime1} and \eqref{eqtime2}, we can find~$0<c_1<c_0$ such that, for any~$N$ large enough and for any~$k\leq k_0$, we have 
\begin{align*}
    \mathbb{E}_k^N[T_k^+]&\geq \mathbb{P}_k^N\left(T_{\geq k_2}< T_k^+\right)\inf_{k'\geq k_2} \mathbb{E}_{k'}^N[T_k]
    \\& \geq \mathbb{P}_k^N\left(T_{\geq k_2}< T_k^+\right)\inf_{k'\geq k_2} \mathbb{E}_{k'}^N[T_{\leq k_0}]
    \\&\geq e^{c_0 N}\mathbb{P}_k^N\left(T_{\geq k_2}< T_k^+\right) \inf_{k'\geq k_2}\mathbb{P}_{k'}^N\left(T_{\leq k_0}>e^{c_0 N}\right)>e^{c_1 N}, 
\end{align*}
which implies \eqref{eqexptime}.
Thus, there remains to show \eqref{eqtime1} and \eqref{eqtime2}. 

Let us start with \eqref{eqtime2}.
If~$Z_0\geq k_2$, the random variable~$T_{\leq k_0}$ dominates a geometric random variable with parameter~\smash{$\max_{k'\geq k_2} \mathbb{P}_{k'}^N\big(T_{\leq k_0}<T^+_{\geq k_2}\big)$}. Thus, we show that there exists~$c_2>0$ such that, for any~$N$ large enough,
\begin{equation}\label{eqmax2}
   \max_{k'\geq k_2} \mathbb{P}_{k'}^N\big(T_{\leq k_0}<T^+_{\geq k_2}\big)\leq e^{-c_2 N}.
\end{equation}

 Let~$c>0$ be the constant given by Condition \ref{hpH2}. Using this 
 condition, for any~$k'\geq k_2$, we write
\begin{align*}\label{eq3.1}
    \mathbb{P}_{k'}^N\big(T_{\leq k_0}<T^+_{\geq k_2}\big)&\leq \mathbb{P}_{k'}^N\left(Z_1<k_1\right)+\max_{k''\in [k_1,k_2-1]}\mathbb{P}_{k''}^N\big(T_{\geq k_2}>T_{\leq k_0}\big)\\&\leq e^{-cN}+\max_{k''\in [k_1,k_2-1]}\mathbb{P}_{k''}^N\big(T_{\geq k_2}>T_{\leq k_0}\big).
\end{align*}
To conclude the proof of \eqref{eqmax2}, it is enough to prove that there exists~$c_3>0$ such that, for every~$N$ large enough and for every~$k''\in [k_1,k_2-1]$,
\begin{equation}\label{eq5.1}
    \mathbb{P}_{k''}^N\big(T_{\geq k_2}>T_{\leq k_0}\big)\leq e^{-c_3 N}.
\end{equation}

To prove \eqref{eq5.1}, we show that if~$a$ is small enough and if~$Z_0 \in [k_1,k_2-1]$, the process
$M_t=e^{-a Z_t} \mathbb{1}(t<T_{\geq k_2})$ is a supermartingale with respect to the filtration~$(\mathcal{F}_t)_{t\geq 0}$ generated by~$(Z_t)_{t\geq 0}$. 
 For any~$t\geq 0$ and for any~$k''\in [k_1,k_2-1]$, we write
 \begin{equation*}
   \mathbb{E}^N_{k''}[M_{t+1}\mid \mathcal{F}_t]\leq M_t \mathbb{E}^N_{k''}[e^{-a\Delta Z_t}\mathbb{1}(t+1<T_{\geq k_2})\mid \mathcal{F}_t]\leq M_{t}\max_{k'''< k_2}\mathbb{E}_{k'''}^N[e^{-a\Delta Z_0}].
   \end{equation*}

 Let~$D$ be the random variable given by Condition \ref{hpH1}, such that~$\Delta Z_0$ stochastically dominates~$D$ and~$\mathbb{E}[D] > 0$. Then, defining~$f(a):=\mathbb{E}[e^{-a D}]$, we have~$f(0)=1$ and~\smash{$f'(0)<0$}, which implies that there exists~$a_0>0$ such that~$f(a_0)\leq 1$. In particular, for any~$k'''<k_2$, we have that~$\mathbb{E}_{k'''}^N[e^{-a_0 \Delta Z_0}]\leq 1$.
    We deduce that the process~$M_t=e^{-a_0 Z_t} \mathbb{1}(t<T_{\geq k_2})$ is a supermartingale with respect to~$(\mathcal{F}_t)$.
 By Doob's Theorem, for any~$k''\in [k_1, k_2-1]$ we have that
\begin{align*}
    \mathbb{P}_{k''}^N\big(T_{\geq k_2}>T_{\leq k_0 } \big)&=\mathbb{P}_{k''}^N\big(T_{\geq k_2}\wedge T_{\leq k_0 }=T_{\leq k_0} \big)\leq \mathbb{E}_{k''}^N\big[M_{T_{\geq k_2} \wedge T_{\leq k_0 }}\big]e^{a_0 k_0}\\&\leq \mathbb{E}_{k''}^N[M_{0}]e^{a_0 k_0}\leq e^{-a_0(k_1-k_0)}\leq e^{-a_0(\beta-\alpha)N}.
\end{align*}
This concludes the proof of \eqref{eqtime2}. It remains to prove \eqref{eqtime1}. 
Let~$k\leq k_0$ and let~$D$ be the random variable given by Condition \ref{hpH1}.  
  Then, there exists~$C''>0$ such that
  \begin{equation*}
\mathbb{P}^N_k(Z_1>k)=\mathbb{P}^N_k(Z_1-k\geq 1)\geq P(D>0)>C''.
 \end{equation*}
  Moreover, using exactly the same argument as is used for the
 proof of \eqref{eqtime2}, we can find~$c_4>0$ such that, for any~$k'> k$, 
\begin{equation*}
    \mathbb{P}^N_{k'}(T_{\geq k_2}<T_k)= 1-\mathbb{P}^N_{k'}(T_{\geq k_2}>T_k)\geq 1-e^{-c_4{k'}+c_4 k}
 \geq 1-e^{-c_4}.
\end{equation*}
Thus, for any~$k\leq k_0$, we have that
\begin{equation*}
    \mathbb{P}^N_k(T_{\geq k_2}<T^+_k)\geq \mathbb{P}^N_k(Z_1>k)\inf_{k'>k} \mathbb{P}^N_{k'}(T_{\geq k_2}<T_k)\geq C'' (1-e^{-c_4}),
\end{equation*}
concluding the proof.

\end{document}